\newtheorem{thm}{Theorem}[section]
\newtheorem*{thmnonum}{Theorem}
\newtheorem{prop}[thm]{Proposition}
\newtheorem{cor}[thm]{Corollary}
\newtheorem{lemma}[thm]{Lemma}
\newtheorem{definition}{Definition}
\newtheorem*{remark}{Remark}
\theoremstyle{definition}
\newtheorem*{example}{Example}
\newcommand{\R}{\mathbb{R}}
\newcommand{\N}{\mathbb{N}}
\newcommand{\p}{\partial}
\newcommand{\ol}{\overline}
\newcommand{\Tr}[1]{\mbox{Tr}\left(#1\right)}
\newcommand{\Trg}[2]{\mbox{Tr}_{#2}\left(#1\right)}
\newcommand{\Det}[1]{\mbox{Det}\left(#1\right)}
\newcommand{\Detg}[2]{\mbox{Det}_{#2}\left(#1\right)}
\newcommand{\tr}[1]{\mbox{tr}\left(#1\right)}
\renewcommand{\l}{\left}
\renewcommand{\r}{\right}
\newcommand{\la}{\langle}
\newcommand{\ra}{\rangle}\renewcommand{\to}{\rightarrow}
\renewcommand{\exp}[1]{~\mbox{exp}\left(#1\right)}
\renewcommand{\a}{\alpha}
\renewcommand{\b}{\beta}
\renewcommand{\det}[1]{~\mbox{det}\left(#1\right)}
\newcommand{\W}{W^{1,n}_{\textit{loc}}}
\renewcommand{\d}{\delta}
\newcommand{\e}{\epsilon}
\newcommand{\M}{\mathcal{M}}
\newcommand{\abs}[1]{\lvert #1 \rvert}
\date{January 8 2013}
\author{Tony Liimatainen}
\title{On the role of Riemannian metrics in conformal and quasiconformal geometry}
\begin{document}
\includepdf[pages=-6, noautoscale]{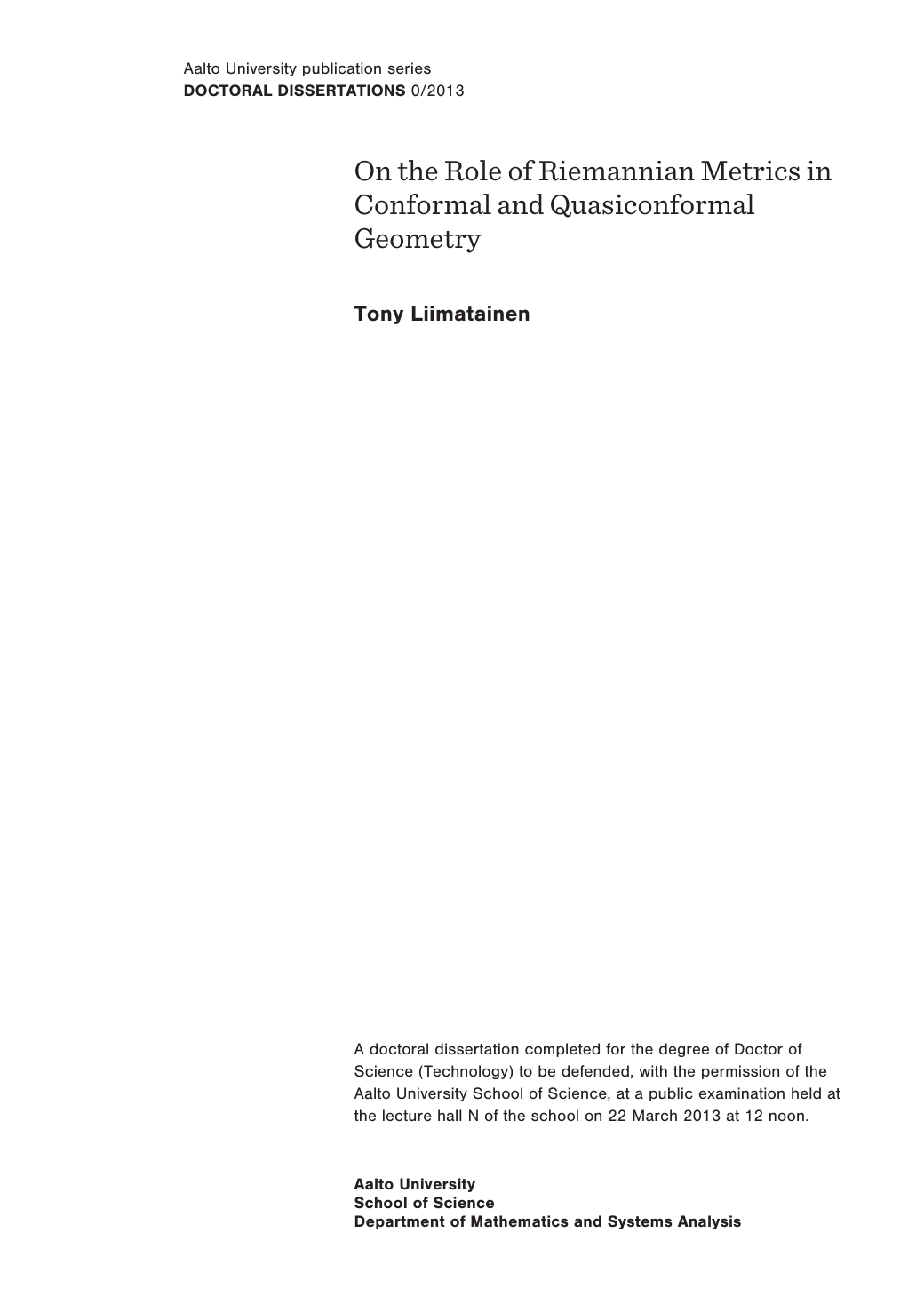}
\draftabstract{

Conformal geometry and the theory of quasiconformal mappings are branches of mathematics that have a broad spectrum of applications ranging from theories in modern physics to recent applications in the inverse conductivity problem.
Notably, the inverse conductivity problem is an especially active area of research with significant contributions from Finnish researchers. The fundamental quantities in these branches of mathematics are Riemannian metrics that many times define, or emerge as solutions to, nonlinear partial differential equations. Partly due to the nonlinearity of the equations in question, these branches of mathematics are still far from being complete theories. 

This thesis focuses on the role of Riemannian metrics in conformal geometry and on the theory of quasiconformal mappings. In general the results of this thesis imply that there is no simple classification of conformal mappings on Riemannian manifolds. A new coordinate invariant definition of quasiconformal and quasiregular mappings is presented and the basic properties of the new class of mappings are established. It is shown that any countable quasiconformal group on a Riemannian manifold (in the introduced sense) can be regarded as a group of conformal mappings with respect to another, optimal, Riemannian metric. In a converse manner, another result of this thesis shows that any smooth manifold of dimension 3 or higher admits infinitely many Riemannian metrics such that there is no conformal diffeomorphisms on the manifold.

The principle of how to find an optimal Riemannian metric for a group of mappings is developed further. It is shown that if the action of a volume form preserving diffeomorphism has a bounded orbit in the space of Riemannian metrics, then a new Riemannian metric can be found such that the diffeomorphism can be regarded as an isometry. The proof of this result relies on generalizations of Neumann's mean ergodic theorem and a fixed point theorem to certain nonpositive curvature metric spaces. The generalizations are formulated and proven in this thesis.

Finally, implications of the regularity of Riemannian metrics in conformal geometry are studied. A new proof of a regularity theorem of conformal mappings between two Riemannian manifolds is achieved. The proof is based on a new coordinate system that generalizes both the harmonic coordinates and the isothermal coordinates. The existence of such coordinates on any Riemannian manifold is established. Additionally, a convergence theorem for conformal mappings is given.
}

\draftabstract[finnish]{

Konformigeometria ja kvasikonformikuvausten teoria ovat matematiikan aloja, joilla on laajalti sovelluksia moderneista fysiikan teorioista viimeaikaisiin sovelluksiin käänteisessä johtavuusongelmassa. Erityisesti käänteisen johtavuuden ongelma on huomattavan aktiivinen tutkimuksen ala, jossa suomalaiset tutkijat ovat saavuttaneet huomattavia tuloksia. Keskeisssä roolissa näissä matematiikan aloissa ovat Riemannin metriikka, jotka usein määrittelevät epälineaarisia osittaisdifferentiaaliyhtälöitä tai esiintyvät niiden ratkaisuina. Johtuen kyseisten yhtälöiden epälineaarisuudesta, nämä matematiikan alat ovet edelleen alati kehittyviä.

Tämä väitöskirja keskittyy tarkastelemaan Riemannian metriikoiden asemaa konformigeometriassa ja kvasikonformikuvausten teoriassa. Yleisellä tasolla voidaan tämän väitöskirjan tuloksiin nojaten sanoa, että Riemannin moniston konformikuvauksille ei ole yksinkertaista luokittelua. Väitöskirjassa esitellään uusi koordinaateista riippumaton määritelmä kvasikonformikuvauksille ja määriteltyjen kuvausten perusominaisuudet johdetaan. Näytetään, että numeroituva kvasikonformikuvausten ryhmä Riemannin monistolla (uuden määritelmän mielessä) voidaan ymmärtää ryhmänä konformikuvauksia toisen, optimaalisen, Riemannin metriikan suhteen. Toinen tämän väitöskirjan tulos osoittaa toisaalta, että millä tahansa vähintään kolmiulotteisella sileällä monistolla on äärettömän monia Riemannin metriikoita, joilden suhteen monistolla ei ole ainuttakaan konformikuvausta.

Väitöskirjassa kehitetään periaatetta, jolla optimaalinen Riemannin metriikka ryhmälle kuvauksia voidaan löytää. Osoitetaan, että jos tilavuusmuodon säilyttävällä diffeomorfismilla on rajoitettu rata Riemannin metriikoiden avaruudessa, niin tällöin löytyy toinen Riemannin metriikka, jonka suhteen kyseinen diffeomorfismi voidaan mieltää isometriana. Tuloksen todistus nojaa Neumannin ergodisen teoreeman ja kiintopistelauseen yleistyksiin tiettyihin ei-positiivisen kaarevuuden metrisiin avaruuksiin. Nämä yleistykset on muotoiltu ja todistettu tässä väitöskirjassa.

Lopuksi tässä työssä tutkitaan, miten Riemannin metriikoiden säännöllisyyden vaikutus ilmenee konformigeometriassa. Uusi todistus konformikuvausten säännöllisyydelle löydetään. Uusi todistus nojautuu uuteen koordinaattisysteemiin, joka yleistää sekä harmoniset että isotermiset koordinaatit. Kyseisten koordinaattien olemassaolo yleisellä Riemannin monistolla todistetaan. Tuloksien avulla osoitetaan suppenemistulos konformikuvauksille.
}


\begin{preface}[Espoo]
The interplay between mathematics and physics has been intriguing me throughout my studies. Before starting my PhD work I had also background in theoretical physics (from the University of Helsinki). I believe that partly because of that I found writing this thesis especially rewarding for that I could apply ideas and concepts from physics to research problems in pure mathematics. During my studies and this PhD work I have been blessed to be surrounded by so many great and curious academic and nonacademic people. Their interest in my studies and work has been most motivational and beneficial for me. This is my thanks to all of you.

The opportunity to write this thesis was given to me by my supervising professor Olavi Nevanlinna and my thesis advisor Kirsi Peltonen. I wish to thank them for all the work they have done to to make this happen. I also wish to thank the Finnish National Graduate School in Mathematics and its Applications that made this thesis financially possible and its director of Hans-Olav Tylli for supporting my various academic travels. 

Two out of the three papers in this thesis are coauthored by professor Mikko Salo who I have had the pleasure to work with. Without his knowledge and contribution this thesis would have turned out very different. Thank you Mikko for everything!

I wish to thank professors Tadeusz Iwaniec and Jeremy Tyson for pre-examining my thesis and their interest in it. Thanks to professors Juha Kinnunen and Antti Kupiainen as well as to Jarmo Malinen for their interest in my ideas in mathematics and giving valuable feedback on them. I would also like to thank professor Sylvie Paycha for inviting me to a conference held in Potsdam in 2011. Sylvie's enthusiasm for mathematics has been an inspiration for me.

During my studies I have had the pleasure to discuss mathematics and physics and beyond with my fellow students Henri Lipponen and Teppo Mattsson who are never short of opinions. Thank you for those numerous occasions. Special thanks I wish to address to Kurt Baarman and Heikki Apiola who I have shared an office during my PhD work.

My warmest thanks I wish to address to my parents, to my love Reija and all my relatives. Your support and faith in my studies and work has been invaluable. Thanks to all my great friends of which I especially wish to thank Jussi Voipio for proofreading a variety of my works.

Finally I wish to thank Aalto university (previously Helsinki University of Technology) in general, which is a great multidisciplinary university filled with inspiring people and activities.
\end{preface}

\tableofcontents

\listofpublications



\chapter{Introduction}
Laws of physics are ever increasingly explained by theories that are based on symmetry principles. For example, our understanding of the Universe in the large relies on the Einstein's theory of gravity, the General relativity. To incorporate the fact that the speed of light is constant with the symmetry principle that the laws of physics should be the same for every observer, Einstein formulated the General relativity as a theory of curved space and time where observable quantities are derived from geometrical objects. 

It is not only Einstein's theory of relativity that is based on symmetry principles nor it is uncommon that various symmetries emerge when statistical properties of physical systems are studied. An interesting fact about symmetries in physics is that many of them, if not most, have a geometrical interpretation. The symmetry principle of Einstein's general relativity is called coordinate invariance, which lies at the heart of modelling geometry mathematically. Another symmetry principle that is relevant to this thesis, and that is also present in some physical phenomena, is the conformal invariance. Roughly speaking, a physical theory satisfies conformal invariance, is conformally invariant, if the theory reacts to deviations of directions, but is oblivious to the magnitude of the deviations. 

The two examples of symmetry principles were mentioned intentionally. The subject of this thesis is the mathematical theory of conformal symmetry, its deviations and the unification of these two with the principle of coordinate invariance. This mathematical theory belongs to a branch of mathematics called conformal geometry, or quasiconformal geometry, if small deviations from conformal symmetry are allowed.

Conformal geometry is a field of mathematics that studies angle preserving transformations, conformal mappings, and how geometrical objects behave under such transformations. Conformal geometry has a long history that can be dated to 18th century and to the studies of complex functions by d'Alembert and Euler~\cite{Bell}. The advances in this field were continued by the works of Cauchy and Riemann in the 19th century. Deriving from the works of d'Alembert and Euler, their works concentrated on a set of equations that are today called the Cauchy-Riemann equations. The Cauchy-Riemann equations define complex differentiable functions, which have the consequential property that these functions preserve the angles between (essentially all) vectors on the complex plane. Complex differentiable functions are conformal transformations.

In the early 20th century, a generalization of conformal mappings were introduced in the works by Gr\"otzsch and by the Finnish mathematician Lars Ahlfors~\cite{Ahlfors, Iwaniec}. These generalizations are called quasiconformal mappings, as coined by Ahlfors. They have the geometrical property that even though they are not conformal mappings in general, these mappings can distort angles only in a controlled manner. Quasiconformal mappings are nearly conformal mappings. Not much later on, still in the early and mid-20th century, quasiconformal mappings were successfully applied in solutions of several problems. Quasiconformal mappings were applied most notably by Teichm\"uller in his studies of extremal mappings between Riemannian surfaces and by Drasin in the inverse Nevanlinna problem (the problem is named after a Finnish mathematician Rolf Nevanlinna)~\cite{Iwaniec, Drasin, Teich}. Riemannian surfaces are two dimensional surfaces in which one can measure angles between intersecting paths on the surface.

The theory of quasiconformal mappings was extended in the late 60's to consider mappings on higher dimensional spaces. The research in this field was lead by Reshetnyak and by the Finnish school of Martio, Rickman and V\"ais\"al\"a~\cite{Rickman, Iwaniec, Reshetnyak, MRV1, MRV2, MRV3}. 

Surfaces in higher dimensional spaces are called manifolds, and a manifold in which angles between intersecting paths can be measured is a Riemannian manifold. The mathematical device that measures angles is called a Riemannian metric. In Einstein's general relativity, a Riemannian metric is the physical quantity that determines the curvature of space and time and it also determines trajectories of particles. Riemannian metrics obey the symmetry principle of coordinate invariance as Einstein desired. 

The concept, the definition, of conformal mappings easily extends for mappings between Riemannian manifolds since their Riemannian metrics allow one to determine if angles are preserved or not. After the studies by Ahlfors and Teichm\"uller and others, the definition of quasiconformal mappings has been extended from mappings on the complex plane, and from mappings between Riemannian surfaces, to apply to mappings between manifolds. These extended definitions of quasiconformality, however, have not fully been compatible with the symmetry principle of coordinate invariance. This inconsistency is addressed in this thesis. The main focus of this thesis lies in the coordinate invariance and its unification with the concept of quasiconformality. The author hopes that this unification, together with the research included in this thesis, will open new applications of the theory of quasiconformal mappings in geometry, but also in physics.

This thesis presents a coordinate invariant method to study quasiconformal mappings between Riemannian manifolds. In the introduced framework coordinate invariance and quasiconformality are studied simultaneously. The coordinate invariant method employs the Riemannian metrics on the manifolds that are used to measure if and how much angles are perturbed by mappings. The new method is applied in this thesis to study the role of Riemannian metrics in questions that are related to conformal and quasiconformal mappings. The original foundations of conformal mappings were in the study of complex functions and thus also in analysis. This thesis continues that tradition by emphasizing the analytical aspects of the developed theory.

The main contributions of this thesis are to the field of conformal geometry and to the field of quasiconformal mappings in mathematics. The theory of quasiconformal mappings, the theory of \emph{Riemannian quasiconformal mappings}, presented in this thesis, is a new approach to the study of quasiconformal mappings and their non-injective counterparts, quasiregular mappings, between Riemannian manifolds. In this summary, the basics of this theory are established and an application of the theory is given. The application demonstrates the usability of the introduced theory. 

The articles that are included in this thesis study questions in conformal geometry and in quasiconformal geometry by the ideas and the tools developed in this summary. The main results of the articles consider the nonexistence of conformal mappings in certain geometries, optimal geometries for certain classes of mappings and the regularity theory of conformal mappings. The articles also contain results that rely only on well established concepts and can therefore be used independently of this summary. In particular, the articles contain new tools that can be used in mathematics beyond the scope of this thesis.

In addition to the works already mentioned, there are of course many other works related to this thesis. Quasiconformal mappings between manifolds are studied and applied in the works of Lelong-Ferrand and Mostow~\cite{FerrandB, Mostow}. In a more general setting, quasiconformal mappings between metric spaces have been studied notably by Heinonen and Koskela in~\cite{HK}. The study of optimal geometries in this thesis relies on the works by Tukia, Iwaniec and Martin~\cite{Tukia, Martin}. The latter of these references considers uniformly quasiregular mappings that can be considered as rational mappings in higher dimensions~\cite{Iwaniec}. The works~\cite{Dairbekov, Reimann1, Reimann2, Holopainen} on quasiconformal and quasiregular mappings on Heisenberg and Carnot groups, equipped with a sub-Riemannian metric, serve as a point of comparison for this summary.

Plausible future applications of the results of this thesis include the following. The inverse conductivity problem of Calder{\'o}n considers the determination of the conductivity of a medium from current and voltage measurements at the boundary of the medium. In the case the medium lies on a plane, quasiconformal mappings have recently shown to be an effective tool in a solution of the inverse conductivity problem of Calder{\'o}n~\cite{AstalaPaivarinta, AstalaPaivarintaLassas}. The tools and concepts presented in this thesis enable one to study to what extent the techniques and ideas in the plane case generalize into more general settings. The inverse problem of Calder{\'o}n on Riemannian manifolds is still an open problem~\cite{DKSaU}. The studies of optimal geometries presented in this thesis could open new interesting questions in the study of perturbations of conformal invariance present in some statistical physics models of percolation and in the study of the related Schramm-Loewner evolution equation~\cite{AnttiK}. More general future applications include studies in conformal geometry and its connection to physics, the study of harmonic mappings between Riemannian manifolds and nonlinear elasticity, and studies concerning infinite dimensional geometries of spaces of Riemannian metrics. On the latter two subjects, the relevant works include~\cite{Jost, Schoen1, IwaniecOnninen, IwaniecKovalevOnninen} and~\cite{Clarke, Freed, Tromba} respectively.

The rest of the summary is divided into three chapters: Chapter 2 develops the theory of Riemannian quasiregular mappings, Chapter 3 gives an application of theory developed in Chapter 2 and Chapter 4 contains a detailed overview of the articles included in this thesis.





\chapter{Quasiregular mappings and distortion on Riemannian manifolds}\label{ch1}
The standard analytic definition of a quasiregular mapping $\phi:\Omega \rightarrow \R^n$, $\Omega$ open in $\R^n$, is given by the \emph{distortion inequality} 
\begin{equation}\label{def}
 ||D\phi||_{op}^n\leq K J_\phi.
\end{equation}
Here $D\phi$ is the differential of $\phi$ and
\begin{equation*}
 ||D\phi||_{op}=\sup_{|X|=1}|(D\phi)X|
\end{equation*}
is its operator norm. The norms of the vectors are given by the Euclidean inner product and $K\geq 1$ is the \emph{quasiconformality constant}. The natural regularity assumption is that the mapping $\phi$ belongs to the Sobolev space $\W(\Omega,\R^n)$ and~\eqref{def} is assumed to hold for the weak differential $D\phi$ of $\phi$ a.e. The (weak) Jacobian determinant $J_\phi=\det{D\phi}$ is assumed to be non-negative or non-positive a.e. on $\Omega$. If $\phi$ is in addition a homeomorphism, then $\phi$ is called quasiconformal~\cite{Iwaniec, Reshetnyak, Rickman, Vaisala}.

The definition generalizes to mappings between manifolds by declaring that the distortion inequality holds in all local coordinates, but in the resulting definition, the quasiconformality constant $K$ depends on the chosen coordinates. The reason for the coordinate dependence is that the Jacobian determinant of a local coordinate representation of a mapping depends not only on the point on the manifold, but also on the chosen coordinates.

Let us explain why the Jacobian determinant of a local coordinate representation of a mapping is coordinate dependent. The Jacobian matrix of a mapping depends on the coordinates of both the domain and target spaces. A coordinate transformation on either space accounts for multiplication of the Jacobian matrix of the mapping by the coordinate transformation matrix, the differential of the transition function. Consequently, the Jacobian determinant of the mapping in the transformed coordinates depends also on the Jacobian determinant of the transition function. 

The source of above the coordinate dependence is that the differential $D\phi$ of a mapping $\phi:M\to N$ at a point $p\in M$ is a linear mapping between the different tangent spaces $T_pM$ and $T_{\phi(p)}N$, while the determinant is defined basis independently only for linear mappings from a vector space to itself. As the tangent spaces of $\R^n$ can be identified, there is no inconsistency within the defining equation~\eqref{def} on $\R^n$. For mappings between manifolds a generalized notion of Jacobian determinant is usually defined by taking the pullback of the volume form on the target manifold~\cite{HIMO}. Using this approach, the Jacobian determinant can be viewed as an $n$-form on $M$ with $n=\dim(M)$, and, if this $n$-form is compared to the volume form of the domain manifold, it yields a function representing the Jacobian determinant. That function is the definition of a Jacobian determinant we use although we motivate the definition somewhat differently.


The \emph{standard definition} of quasiregular (or quasiconformal) mappings between manifolds uses coordinate charts whose transition functions are quasiconformal mappings on $\R^n$~\cite[Ch. 1.6]{Iwaniec}. A collection of such coordinate charts, an atlas, is called a \emph{quasiconformal structure}. In this case, a mapping is said to be quasiregular if~\eqref{def} holds in all the local coordinates of the chosen atlas for some finite $K$. The quasiconformality constant $K$ depends on the chosen atlas and may vary if coordinate charts outside the atlas are used. This is the approach already mentioned above. The definition is not coordinate invariant, but the qualitative condition that $K$ is bounded is sufficient in many applications. This definition of quasiregularity has been successfully applied in several studies; see e.g.~\cite{Mostow, Heinonen, FerrandA}. The definition also extends to topological manifolds which are not even $C^1$ smooth: except in dimension $4$, quasiconformal structures can be found even for topological manifolds~\cite{Iwaniec, Donaldson}. 

The other common definition is the \emph{metric definition} of quasiconformal mappings on metric spaces, which is valid also for Riemannian manifolds where distance functions are given by their Riemannian metrics~\cite{Rickman, Koskela}. We also mention the definition of quasiregular mappings between Riemannian spaces by Reshetnyak~\cite[Ch. I.5.]{Reshetnyak}. The governing principle of his definition is closely related to that in the one we give. 

We present a new definition for quasiregular mappings between Riemannian manifolds that generalizes the definition of quasiregular mappings on $\R^n$. The mappings satisfying the new definition are called \emph{Riemannian quasiregular mappings}. The definition is independent of the choice of local coordinates and uses the standard framework of tensor analysis. The freedom to make any choice of coordinates allows us to apply techniques and formulas from Riemannian geometry. This fact also establishes new connections to other branches in Riemannian geometry and to geometric analysis. See~\cite{Liimatainen1} for a first step in this direction. The freedom to work in arbitrary coordinates might be of practical interest also for the study of quasiregular mappings on $\R^n$.

The new definition of quasiregular mappings we give, provides systematic and quantitative tools to study global problems concerning quasiregular mappings and Riemannian geometry. It is the coordinate dependence of the standard definition that can easily lead to a bookkeeping issues on the use of different coordinate charts. For an example on this matter, see~\cite{Riikka} where a rigorous treatment of a result on the existence of quasiregular mappings from $\R^n$ to a manifold $M$ is presented. By using the new coordinate invariant definition, some of the arguments in that work can be seen to simplify. From the view point of Riemannian geometry another shortage of the standard definition is the ambiguity in the quasiconformality constant $K$. The distortion of a mapping, that on Riemannian manifolds is measured by the shapes of the images of tangent spheres, is only qualitative in the standard definition.

The metric definition is coordinate invariant, but our definition applies in problems that are analytic in nature. The defining distortion inequality of the new Riemannian quasiregular mappings is analogous to~\eqref{def}. We give an application of the theory we develop to a global existence problem that illustrates the advantages of the new definition. This application is explained below in more detail. We also mention that the definition we give extends readily to a definition of mappings between Riemannian manifolds of finite distortion.

The main subjects of the present work are the basic geometric and analytic properties of Riemannian quasiregular mappings. Theorem~\ref{rqr} illustrates the characteristic properties of Riemannian quasiregular mappings and implies that these mappings share the same analytic properties as the quasiregular mappings defined with respect to a quasiconformal structure. Theorem~\ref{uniform_conv} is a natural convergence theorem for Riemannian quasiregular mappings. Some of the results we present are directed to be used in a subsequent paper that considers regularity of conformal mappings on Riemannian manifolds~\cite{LiimatainenSalo}. That paper presents a natural application of the theory we develop and can be considered as a partial motivation for the present work.

We give a geometric application of the new definition. The application generalizes a result by Tukia concerning invariant conformal structures for quasiconformal groups~\cite{Tukia}. We formulate and prove a theorem, which shows that every countable quasiconformal group on a Riemannian manifold admits an invariant conformal structure. Previously the result and its generalization to abelian semigroups have been proven only for $\R^n$, its open subsets and for $n$-spheres~\cite{Tukia, Martin, Iwaniec}. By our definition, Tukia's result generalizes straightforwardly to general Riemannian manifolds. In addition, we discuss how to generalize the theorem we give to abelian semigroups. See also~\cite{Liimatainen1} for a recent related result.  

To prove the existence of an invariant conformal structure for a quasiconformal group, one has to construct a fiber bundle over the manifold whose sections are conformal structures. The construction of this bundle is presented and we show that the bundle admits an elegant geometry. The geometry of the bundle is inherited form the natural geometry of the set $SL(n)/SO(n)$ of positive definite symmetric determinant one matrices.

\section{A Riemannian definition of quasiregular mappings}\label{section_riemannian_def}
The new \emph{Riemannian definition} of quasiregular and quasiconformal mappings is given via the Riemannian metrics on the manifolds. 

\begin{definition}\label{idef}
Let $\phi:(M,g)\to (N,h)$ be a localizable $\W(M,N)$ mapping between Riemannian manifolds with continuous Riemannian metrics. In this case, the mapping $\phi$ is said to be \emph{Riemannian $K$-quasiregular} if the Jacobian determinant of $\phi$ has locally constant sign and if it satisfies the distortion inequality
\begin{equation}\label{ieq}
 \emph{Tr}_g\,(\phi^*h)^n \leq K^2\, \emph{Det}_g\,(\phi^*h) \mbox{ a.e.}
\end{equation}  
If the mapping $\phi$ is in addition a homeomorphisms, then $\phi$ is called \emph{Riemannian $K$-quasiconformal}.
\end{definition}

Clarification of the details are in order. The \emph{invariant normalized trace} $\mbox{Tr}$ and the \emph{invariant determinant} $\mbox{Det}$ above for a general $2$-covariant tensor field $T=(T_{ij})\in T^{2}_{0}(M)$ are given in local coordinates by
\begin{equation*}
\begin{split} 
 \Trg{T}{g}=&\frac{1}{n} \tr{g^{-1}T} \\
 \Detg{T}{g}=&\det{g^{-1}T}.
\end{split}
\end{equation*}
Here the trace and the determinant on the right hand sides are the usual ones for the matrix product $g^{-1}T$ of the representation matrices of $g^{-1}$ and $T$ with respect to any local frame and coframe. Occasionally we omit the subscript $g$ from $\Trg{\cdot}{g}$ and $\Detg{\cdot}{g}$ if the Riemannian metric used in the definition is clear from the context.

The Sobolev space $W^{1,p}(M,N)$ of mappings between Riemannian manifolds is usually defined by isometrically embedding the target manifold $N$ to some $\R^k$, where $k$ is large enough, and then defining
\begin{equation}\label{sobo}
 W^{1,p}(M,N)=\{u\in W^{1,p}(M,\R^k): u(x)\in N \mbox{ a.e.}\}.
\end{equation}
Here $u$ is understood as the composition $E\circ\phi$, where $\phi:M\to N$ and $E$ is an embedding of $N$ to some $\R^k$, implicit in the definition. See~\cite{HIMO, Hajlasz2, Schoen1, Schoen2} for details about this definition. For our purposes the following simpler definition of Sobolev spaces is enough. This is because we assume that the mappings we consider are localizable. 

A mapping $\phi$ is \emph{localizable} if for every $p\in M$ there exist a neighborhood $U$ of $p$ and a coordinate neighborhood $V$ of $\phi(p)$ such that $\phi(U)$ is compactly contained in $V$, $\phi(U)\subset\subset V$. If $p\in M$, and $U$ and $V$ are as described, the restriction $\phi:U\to V$ is called a \emph{localization} of $\phi$ at $p$ or simply a localization. Note that every continuous mapping is localizable. In fact, Riemannian quasiregular mapping can be redefined on sets of measure zero to continuous mappings (see Thm.~\ref{rqr}), and therefore we could have assumed continuity in the definition of Riemannian quasiregular mappings above without any loss of generality. We assume only localizability of the mappings, since the emphasis of the present work is in its functional analytic aspects, where continuity is implied by the theory.

The localizability of the mappings allows us to define Sobolev spaces in terms of local coordinate charts. We say that $\phi$ belongs to the Sobolev space $W^{1,p}(M,N)$, $p>1$, if the coordinate representation of every localization $\phi:U\to V$ is in the Sobolev space $W^{1,p}(U,V)$ in $\R^n$. Here $U$ and $V$ are identified with the images of $U$ and $V$ under coordinate charts. This is the definition of Sobolev spaces we use. We remark that the same definition of Sobolev spaces has also been used in the study of harmonic mappings between manifolds~\cite{Jost}. Local Sobolev spaces $W^{1,p}_{loc}(M,N)$ are defined in an analogous manner.

Assume that a mapping is localizable and that it belongs to the Sobolev space $W^{1,p}(M,N)$ in the sense of~\eqref{sobo}. Then the mapping has the property that local coordinate representations of the mapping belong to $W^{1,p}$ in $\R^n$. This follows from the fact that if $U$ and $(V,\{y^i\})$ are as in the definition of localizability, we can write
$$
\phi^i=y^i\circ\phi=(y^{i}\circ E^{-1})\circ (E \circ \phi),
$$
where $E:M\to N$ as in~\eqref{sobo}. Therefore, $\phi^i$ is a function on an open subset $U$ of $\R^n$ that belongs to the Sobolev space $W^{1,p}(U,\R)$ as a composition of a smooth and a Sobolev mapping; see e.g.~\cite[Thm. 6.16]{Lieb}. It follows that mappings belonging to the Sobolev space $W^{1,p}(M,N)$ in the sense of~\eqref{sobo} satisfy the definition of Sobolev mappings we use.

We define the \emph{weak differential} $D\phi$ of $\phi$ in terms of the weak differentials of the component functions of the coordinate representation of the localizations of $\phi$. The chain rule of differentiation, valid for compositions of Sobolev and $C^1$ smooth mappings~\cite[Ch. I.2.5.]{Reshetnyak}, applied to the transition functions, shows that $D\phi$ is a well-defined bundle map $TM\to TN$ mapping each tangent space $T_pM$ to $T_{\phi(p)}N$. The proof is the same as in the smooth case. Naturally, the weak differential is defined only modulo sets of measure zero. The (weak) pullback of the Riemannian metric $h$ that appears in the distortion inequality is defined with respect to the weak differential by the usual formula
$$
\phi^*h=D\phi^Th|_\phi D\phi.
$$
The Jacobian determinant of $\phi$ has \emph{locally constant sign} if the Jacobian determinant of every coordinate representation of every localization is either non-negative or non-positive a.e.

We assume that the manifolds $M$ and $N$ are of equal dimension $n\geq 2$, oriented and $C^1$ smooth. The Riemannian metrics $g$ and $h$ are continuous unless otherwise stated. The measures $\mu_g$ and $\mu_h$ denote the measures defined by the Riemannian volume forms of $g$ and $h$. These measures are equivalent (sets of measure zero are the same) to the ones constructed by pulling back the Lebesgue measures on $\R^n$ by using local coordinates and partitions of unity. The equivalence is due to the continuity of $g$ and $h$.

\subsection{Motivation}

Let us motivate the given definition of Riemannian quasiregular mappings. The initial motivation for the definition comes from the equation
\begin{equation*}
 \phi^*h=c\,g
\end{equation*}
defining conformal mappings on Riemannian manifolds. The conformal factor $c$ can be solved from the equation in several ways. By taking the invariant normalized trace and the invariant determinant, one has
\begin{equation*}\label{solve_c}
 c=\Trg{\phi^*h}{g} \mbox{ and } c^n=\Detg{\phi^*h}{g}.
\end{equation*}
Equating the formulas for $c$ yields
\begin{equation*}
 \Trg{\phi^*h}{g}^n = \Detg{\phi^*h}{g}.
\end{equation*}
A relaxation of this equality to an inequality with a factor $K^2\geq 1$ gives the distortion inequality~\eqref{ieq}. A conformal mapping is $1$-quasiregular, and the converse is also true as is shown in Proposition~\ref{1qc} (at least if the regularity properties are disregarded). The appearance of the square of $K$ will become apparent later. 

There is a geometrical way to interpret the definition of Riemannian quasiregular mappings. The tangent spaces of Riemannian manifolds $M$ and $N$ are equipped with inner products $g$ and $h$. Thus the (weak) differential of $\phi$ at $p\in M$ is a linear mapping between inner product spaces,
\begin{equation*}
 D\phi:(T_pM,g_p)\rightarrow (T_{\phi(p)}N,h_{\phi(p)}).
\end{equation*}
It follows that there exists the (formal) adjoint 
\begin{equation*}
D\phi^*: (T_{\phi(p)}N,h_{\phi(p)})\rightarrow (T_pM,g_p)
\end{equation*}
of $D\phi$ satisfying
\begin{equation*}
 g(D\phi^* U,V)=h_\phi(U,D\phi V),
\end{equation*}
for all $U\in T_{\phi(p)}N$ and $V\in T_pM$ at each point $p\in M$. Let us calculate the explicit form of the adjoint.

Let $p\in M$ and choose some local frames on neighborhoods of $p\in M$ and $\phi(p)\in N$. Let us denote by $\la \cdot,\cdot\ra$ the standard Euclidean inner product of vectors. From the definition of the adjoint we have
\begin{equation*}
\begin{split}
g(D\phi^*U,V)&=h_\phi(U,D\phi V)=\langle U,h_\phi D\phi V\rangle =\langle (h_\phi D\phi)^TU, V\rangle\\
  &=\langle D\phi^T(h_\phi)^T U,V\rangle=\langle D\phi^Th_\phi U,g^{-1}g V\rangle \\
  &=\langle (g^{-1})^T D\phi^Th_\phi U,g V\rangle=g(g^{-1} D\phi^Th_\phi U,V).
\end{split}
\end{equation*}
Thus the adjoint of $D\phi$ is
\begin{equation*}
 D\phi^*=g^{-1} D\phi^Th_\phi.
\end{equation*}

The normalized Hilbert-Schmidt inner product of linear mappings $T$ and $S$ between inner product spaces is given by
\begin{equation*}
 \langle T,S\rangle=\frac{1}{n} \tr{T^*S}.
\end{equation*}
The induced norm of the (normalized) Hilbert-Schmidt inner product applied to $D\phi$ now yields
\begin{equation*}
 ||D\phi||^2:=\langle D\phi,D\phi\rangle=\frac{1}{n}\tr{g^{-1} D\phi^Th_\phi D\phi}=\Trg{\phi^*h}{g}.
\end{equation*}
Thus, the invariant normalized trace is the square of the normalized Hilbert-Schmidt norm of $D\phi:(TM,g)\rightarrow (TN,h)$. We also define yet another determinant as the square root of $\Det{\phi^*h}$,
\begin{equation*}
 \Det{D\phi}:=\sqrt{\det{D\phi^*D\phi}}.
\end{equation*}

Using what we have just observed, we give an equivalent definition of quasiregular mappings. 

\begin{definition}(Equivalent definition)
 A localizable mapping (homeomorphism) $\phi:(M,g)\rightarrow (N,h)$ of Sobolev class $\W(M,N)$ is $K$-quasiregular ($K$-quasiconformal) if the Jacobian determinant of $\phi$ has locally constant sign and if 
\begin{equation*}
 ||D\phi||^n\leq K\, \emph{Det}\,(D\phi).
\end{equation*}
Here $||D\phi||$ and $\Det{D\phi}$ are the normalized Hilbert-Schmidt norm and the determinant of $D\phi$ defined as
\begin{equation*}
\begin{split}
  ||D\phi||&=\frac{1}{\sqrt{n}}\sqrt{\emph{tr}(D\phi^*D\phi)} \\
  \emph{Det}\,(D\phi)&=\sqrt{\emph{det}(D\phi^*D\phi)}. \\
\end{split}
\end{equation*}
\end{definition}

The definition is just the (positive) square root of Definition~\ref{idef} of quasiregular mappings. It also explains the appearance of $K$ squared in the first definition. 

We have defined traces and determinants that we have claimed to be coordinate invariant. To see the invariance, note first that the mapping $D\phi^*D\phi$ is a mapping from $T_pM$ to itself at each point $p\in M$. Also, a general $\binom{1}{1}$-tensor field can be regarded as a fiber preserving linear mapping $TM\rightarrow TM$. The trace and the determinant of linear mappings from a vector space to itself are independent of the choice of a basis. It follows that the defined traces and determinants of $D\phi$ and $g^{-1}\phi^*h$ and more generally for $\binom{2}{0}$-tensors fields are independent of the local framing. In particular, the definitions are independent of the choice of a local coordinate frame. 

\begin{remark}
A $\binom{1}{1}$-tensor $g^{-1}T$, $T\in T^2_0(M)$, thought as a linear bundle map $TM\rightarrow TM$, has also a well defined eigenvalue equation
\begin{equation*}
 g^{-1}T(X)=\lambda X.
\end{equation*}
The coordinate independent eigenvalues that naturally depend on the point of the manifold are functions on $M$. The eigenvalues are solutions to the coordinate invariant equation
\begin{equation}\label{eig}
 \emph{Det}(T-\lambda g)=\emph{det}(g^{-1}T-\lambda I)=0.
\end{equation}
We will use this fact to make calculations in arbitrary coordinates and local frames.
\end{remark}

We end this section with an example that consider how the flow of a vector field generates distortion.
\begin{example}\label{conformal_flow}
Let $X$ be a (say $C^1$) smooth vector field on $(M,g)$. We study the quasiconformality of the flow $\phi_t$ generated by $X$ by calculating the time derivative of the distortion function $K(t)$.

Define $g(t)=\phi_t^*g$. We take the traces and the determinants with respect to $g$ unless otherwise indicated. Elementary calculations together with the identity
\begin{equation*}
 \frac{d}{dt}\Det{g(t)}=\Det{g(t)}\Trg{\dot{g}(t)}{t}
\end{equation*}
yield
\begin{equation*}
 \frac{d}{dt}K(t)^2=\frac{d}{dt}\frac{\Tr{g(t)}^n}{\Det{g(t)}}=nK(t)^2\l(\frac{\Tr{\dot{g}(t)}}{\Tr{g(t)}}-\Trg{\dot{g}(t)}{t}\r).
\end{equation*}
We can write the equation above as
\begin{equation}\label{timeder}
 \dot{K}(t)=K(t)\frac{n/2}{\Tr{g(t)}} \mbox{Tr}\big(\dot{g}(t)-\Trg{\dot{g}(t)}{t}g(t)\big).
\end{equation}

Let $h$ be a Riemannian metric on $M$, let $T\in T^{2}_{0}(M)$ and let $||\cdot||_h$ denote the usual norm in the bundle $T\in T^{2}_{0}(M)$ induced by the Riemannian metric $h$, $||T||_h^2 = \tr{h^{-1}T^Th^{-1}T}$. The invariant trace of a $\binom{2}{0}$-tensor $T$ satisfies an inequality
\begin{equation*}
 \Trg{T}{g}\leq \Trg{h}{g}||T||_h.
\end{equation*}
To see this, use the Cauchy-Schwartz inequality for the Hilbert-Schmidt inner product of the representation matrices of $g^{-1}$ and $T$ in an $h$-or\-thonor\-mal frame,
\begin{equation*}
 \begin{split}
  n \Trg{T}{g}&=\tr{g^{-1}T} \leq ||g^{-1}||_{HS}||T||_{HS}=\tr{g^{-2}}^{1/2} ||T||_{HS} \\
  &\leq \tr{g^{-1}} ||T||_{HS} = n \Trg{h}{g} ||T||_h.
 \end{split}
\end{equation*}
Applying this inequality with $h=g(t)$ to~\eqref{timeder} gives
\begin{equation*}
 \dot{K}(t)\leq K(t)\frac{n}{2} ||\dot{g}(t)-\Trg{\dot{g}(t)}{t}g(t)||_{g(t)}.
\end{equation*}

The Riemannian inner product, its induced norm and the invariant trace of any tensor $T$ behave naturally under pullbacks,
\begin{equation*}
 ||T||_g|_\phi=||\phi^*T||_{\phi^*g}, \ \ \Trg{T}{g}|_\phi=\Trg{\phi^*T}{\phi^*g}.
\end{equation*}
Also, the time derivative of $g(t)$ is the pullback by $\phi_t$ of the Lie derivative of $g$ in the direction of $X$ (cf.~\cite[p. 13]{Ricci_book}),
\begin{equation*}
 \dot{g}(t)=\phi_t^*\mathcal{L}_Xg=\phi_t^*\dot{g}(0).
\end{equation*}
With these identities we can write
\begin{equation*}
\begin{split}
 ||\dot{g}(t)-\Trg{\dot{g}(t)}{t}g(t)||_{g(t)}&=||\phi_t^*\mathcal{L}_X g-\Trg{\phi_t^*\mathcal{L}_X g}{\phi_t^*g}\phi_t^*g||_{\phi_t^*g} \\
 &=||\mathcal{L}_Xg-\Tr{\mathcal{L}_Xg}g||_g|_{\phi_t}=||SX||_g|_{\phi_t}.
\end{split}
\end{equation*}
The notation $SX$ refers to the Ahlfors operator $S$ applied to $X$. The kernel of $S$ defines conformal Killing vector fields~\cite{Pierzchalski}. Together with Gr\"onwall's inequality we see that the quasiconformality constant under the flow of the vector field $X$ is controlled by the supremum $g$-norm of $SX$:
\begin{equation*}
 K(t)\leq \exp{tn||SX||_\infty/2}.
\end{equation*}
The flow of $X$ is a family of $K(t)$-quasiconformal mappings with $K(t)$ given above. We have recovered an analogous result to the one given in~\cite{Pierzchalski}.
\end{example}

\section{Basic properties of quasiregular mappings}
The definition of Riemannian quasiregular mappings is natural in the Riemannian setting and generalizes the standard definition of quasiregular mappings on $\R^n$. To compare the new definition with the standard definition of quasiregular mappings on $\R^n$, let the manifolds in the defining equation~\eqref{ieq} be Euclidean domains in $\R^n$ with Cartesian coordinates. In Cartesian coordinates the Euclidean metrics $g$ and $h$ are the identity matrices $I$. We have
\begin{equation*}
 \frac{1}{n}||D\phi||_1^2\leq \frac{1}{n}||D\phi||_{HS}^2 = \frac{1}{n}\tr{D\phi^*D\phi}= \Tr{\phi^*h}=\frac{1}{n}||D\phi||_{HS}^2 \leq ||D\phi||_1^2,
\end{equation*}
where we have denoted the operator norm and the (unnormalized) Hil\-bert-Schmidt norm by $||\cdot||_1$ and $||\cdot||_{HS}$ respectively. Moreover, the right hand side of the distortion inequality~\eqref{ieq}, for $g=h=I$,  now reads
\begin{equation*}
 K^2\, \Det{\phi^*h}=K^2\det{D\phi}^2.
\end{equation*}
Together these show that a $K$-quasiregular mapping on $\R^n$ in the standard sense (as defined by~\eqref{def}) is a Riemannian quasiregular mapping with the same quasiregularity factor $K$. 

On the contrary, a Riemannian $K$-quasiregular mapping between Euclidean domains on $\R^n$ is a $n^{n/2}K$-quasiregular mapping in the standard sense. The quasiregularity constant $n^{n/2}K$ in this case is not the best one in general. For example, $1$-quasiregular mappings on $\R^n$ are $1$-quasiregu\-lar in both new and old definitions. This can be seen from the next proposition below. The difference in the quasiregularity constants on $\R^n$ is an effect of the different norm used for the differential of the mapping.

We call the $\binom{1}{1}$-tensor $g^{-1}\phi^*h$ the \emph{distortion tensor}, which is the (unnormalized) generalization of that used in the theory of quasiregular mappings on $\R^n$~\cite[p.100]{Iwaniec}. It is worth noting that the distortion tensor has non-negative coordinate invariant eigenvalues for any (weakly differentiable) mapping $\phi: (M,g)\rightarrow (N,h)$. The coordinate invariance of the eigenvalues follows from the remark of the previous section. The eigenvalues are thus functions on $M$, and, if the mapping is assumed to be of Sobolev class $\W(M,N)$, the eigenvalues belong to $L^{n/2}_{loc}(M)$ due to the inequality
$$
\lambda\leq \tr{g^{-1}\phi^*h}\in L^{n/2}_{loc}(M).
$$
When the mapping $\phi$ is $K$-quasiregular, the ratio of the maximum and minimum eigenvalue is in addition bounded a.e. by $n^nK^{2}$:
\begin{equation}\label{ratio_bound}
 K^2\geq \Tr{\phi^*h}^n/\Det{\phi^*h}=\frac{1}{n^n}\left(\sum_i\lambda_i\right)^n/\prod_i \lambda_i \geq \frac{1}{n^n}\frac{\lambda_{\mbox{max}}}{\lambda_{\mbox{min}}}.
\end{equation}
If $\phi$ is conformal, the only eigenvalue of the distortion tensor is the conformal factor $c$ of the mapping with multiplicity $n$.

\begin{prop}\label{1qc}
A non-constant Riemannian $1$-quasiregular mapping $\phi:(M,g)\rightarrow (N,h)$ is conformal,
\begin{equation*}
 \phi^*h=cg \mbox{ a.e.,}
\end{equation*}
for some positive function $c$. Here $\phi^*h$ is defined by the weak differential of $\phi$.
\end{prop}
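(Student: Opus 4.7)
The plan is to extract $\phi^*h = cg$ from the equality case of the arithmetic-geometric mean inequality applied to the eigenvalues of the distortion tensor. At almost every $p\in M$, the endomorphism $g^{-1}\phi^*h:TM\to TM$ is defined, and by the remark following Definition~\ref{idef} its eigenvalues $\lambda_1(p),\ldots,\lambda_n(p)$ are coordinate invariant functions on $M$. They are non-negative: if $g^{-1}\phi^*h\, X = \lambda X$ for some non-zero $X\in T_pM$, then $\lambda\, g(X,X) = \phi^*h(X,X) = h(D\phi X, D\phi X) \geq 0$. In terms of the $\lambda_i$, the distortion inequality~\eqref{ieq} with $K=1$ reads
\begin{equation*}
 \l(\frac{1}{n}\sum_{i=1}^{n}\lambda_i\r)^{n} \leq \prod_{i=1}^{n}\lambda_i,
\end{equation*}
while AM-GM gives the reverse inequality unconditionally. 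Equality therefore holds a.e., and the equality case of AM-GM forces $\lambda_1(p)=\cdots=\lambda_n(p)$. Denote this common value by $c(p):=\Trg{\phi^*h}{g}(p)$.

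Next I would upgrade the pointwise equality of eigenvalues to the tensor identity. Since $\phi^*h$ is symmetric, $g^{-1}\phi^*h$ is self-adjoint with respect to $g$ and hence diagonalizable in a $g$-orthonormal frame. Having a single repeated eigenvalue $c(p)$ therefore forces $g^{-1}\phi^*h = c\,I$ a.e., which is equivalent to $\phi^*h = c\,g$ a.e. The function $c$ is measurable, non-negative, and lies in $L^{n/2}_{\loc}(M)$ by the bound for eigenvalues displayed just before~\eqref{ratio_bound}.

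It remains to show $c>0$ a.e. Since $h$ is positive definite, $c(p)=0$ is equivalent to $\phi^*h_p = 0$, which is equivalent to $D\phi|_p = 0$, and in particular forces $J_\phi(p)=0$. The basic properties of Riemannian quasiregular mappings established in Theorem~\ref{rqr} transport the classical Reshetnyak theorem on the almost everywhere non-vanishing of the Jacobian of a non-constant quasiregular mapping from the Euclidean setting to the Riemannian one, via the coordinate representations of localizations of $\phi$. Combined with the locally constant sign of $J_\phi$, this gives $c>0$ a.e.

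The algebraic core is just the equality case of AM-GM and should take only a few lines. The step I expect to require the most care is the appeal to $J_\phi\neq 0$ a.e.\ for non-constant $\phi$, which depends essentially on having the Reshetnyak-type theory available for Riemannian quasiregular mappings; that is the content of the preceding Theorem~\ref{rqr} and the results it imports from the $\R^n$ theory.
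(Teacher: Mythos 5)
Your proposal is correct and follows essentially the same route as the paper: both arguments reduce to the equality case of the AM--GM inequality for the eigenvalues of the distortion tensor $g^{-1}\phi^*h$, use its diagonalizability to conclude $\phi^*h = c\,g$, and invoke the non-vanishing of $J_\phi$ a.e.\ from Theorem~\ref{rqr} for positivity. The only cosmetic difference is that the paper establishes positivity of the eigenvalues up front via $J_\phi\neq 0$ a.e., whereas you defer that step to the end; both orderings are sound.
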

\begin{proof}
It suffices to prove the claim locally. Any local matrix representation of $g^{-1}\phi^*h$ has positive eigenvalues $\lambda_i$ a.e. This follows from the fact that $J_\phi\neq 0$ a.e. by Theorem~\ref{rqr} and therefore $g^{-1}\phi^*h$ is a product of symmetric positive definite matrices a.e. 

The inequality of arithmetic and geometric means shows that the function
\begin{equation}\label{balance}
 \Tr{\phi^*h}^n-\Det{\phi^*h}=\left(\frac{1}{n}\right)^n(\lambda_1+\cdots+\lambda_n)^n-\lambda_1\cdots\lambda_n
\end{equation}
has a minimum $0$ for positive $\lambda_i$. This happens exactly when all the $\lambda_i$'s coincide, $\lambda_i=\lambda$. Now, the condition of $1$-quasiregularity yields
\begin{equation*}
 \Tr{\phi^*h}^n\leq\Det{\phi^*h}=\lambda_1\cdots\lambda_n\leq\left(\frac{1}{n}\right)^n(\lambda_1+\cdots+\lambda_n)^n=\Tr{\phi^*h}^n.
\end{equation*}
Accordingly, the minimum of the function in~\eqref{balance} is achieved, and we have
\begin{equation*}
 g^{-1}\phi^*h=A^T\Lambda A=\lambda~\mbox{Id} \mbox{ a.e.}
\end{equation*}
Here $A$ is the diagonalizing orthogonal matrix field of the local coordinate representation of $g^{-1}\phi^*h$. The matrix $\Lambda$ is the diagonal matrix, which has the eigenvalue $\lambda$ (of multiplicity $n$) as the diagonal entries.
\end{proof}

The definition of Riemannian quasiregular mappings can also be interpreted as a definition of quasiconformal metrics by inserting $\phi=\mbox{Id}$ into the definition. We say that two Riemannian metrics $g$ and $h$ are \emph{quasiconformally related} if the relation
\begin{equation*}
 \Trg{h}{g}^n\leq K^2\,\Detg{h}{g}
\end{equation*}
holds for some constant $K \geq 1$. 

The quasiconformality relation, which we denote by $\sim$, is an equivalence relation in the set of Riemannian metrics. This is shown in the proposition below. We write $\mathcal{M}$ for the set of all Riemannian metrics on $M$ with no prescribed regularity properties and $\mbox{Map}(M,\R)$ denotes the general space of functions on $M$.

\begin{definition}\label{dist_func}
The \emph{distortion function} $K^2:\mathcal{M}\times\mathcal{M}\to \emph{Map}(M,\R)$ is the function defined by,
\begin{equation*} 
 K^2(g,h)(p)=\frac{\emph{Tr}_g\,(h)^n}{\emph{Det}_g(h)}(p).
\end{equation*}
\end{definition}

We record for future reference that for any mapping $\phi:M\to N$ the distortion function satisfies 
\begin{equation}\label{functoriality}
 K^2(\phi^*h,\phi^*k)=K^2(h,k)|_\phi
\end{equation}
at the points where the (weak) Jacobian matrix of $\phi$ is invertible. Here $h$ and $k$ are arbitrary Riemannian metrics on $N$. This fact is essentially a statement of the coordinate invariance of $K^2$. 

Note that a mapping satisfies the distortion inequality~\eqref{ieq} precisely when the distortion function applied to $g$ and $\phi^*h$ is bounded on $M$. The distortion function is invariant under conformal scaling of either or both the metrics $g$ and $h$ implying that the notion of quasiregularity is naturally conformally invariant. 

For the distortion function we have the following.
\begin{prop}\label{distortion_function}
The distortion function $K^2:\mathcal{M}\times\mathcal{M}\to \emph{Map}(M,\R)$ of Riemannian metrics on $M$ has the following properties:
\begin{enumerate}
  \item For $g,h, k\in \mathcal{M}$ the distortion function satisfies
\begin{equation}\label{submult}
 K^2(g,k)\leq n^n K^2(g,h)K^2(h,k)
\end{equation}
and
\begin{equation}\label{reflex}
 K^2(g,h)\leq K^2(h,g)^{n-1}.
\end{equation}
\item The quasiconformality relation $\sim$ is an equivalence relation on $\mathcal{M}$.
\end{enumerate}  
\end{prop}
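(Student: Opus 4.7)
The strategy is to work pointwise and translate everything to statements about the (coordinate-invariant) positive eigenvalues of the bundle endomorphisms $g^{-1}h$, $h^{-1}k$, and $g^{-1}k$, whose existence was noted in the remark following Definition~\ref{idef}.

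For the submultiplicativity bound~\eqref{submult}, the determinant piece is immediate: since $g^{-1}k = (g^{-1}h)(h^{-1}k)$ as endomorphisms of $T_pM$, multiplicativity of the determinant yields $\Detg{k}{g} = \Detg{h}{g}\,\Detg{k}{h}$. The trace is harder because $\Trg{k}{g} = \frac{1}{n}\tr{(g^{-1}h)(h^{-1}k)}$ involves a product of two endomorphisms that are not simultaneously diagonalizable in general. The plan is to fix a point and pick a basis of $T_pM$ that is $h$-orthonormal and diagonalizes $h^{-1}k$; in that basis $h$ is the identity, $k$ is a diagonal matrix with positive entries $\nu_1,\ldots,\nu_n$, and $g$ is represented by a symmetric positive definite matrix $G$. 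Then $\tr{g^{-1}k} = \sum_i (G^{-1})_{ii}\,\nu_i$, and since $G^{-1}$ has positive diagonal entries this is bounded by $(\max_i \nu_i)\,\tr{G^{-1}}$. Controlling $\max_i \nu_i \leq \sum_i \nu_i = n\,\Trg{k}{h}$, raising to the $n$-th power, and dividing by the determinant identity produces precisely the factor $n^n$.

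For the inequality~\eqref{reflex}, let $\mu_1,\ldots,\mu_n>0$ denote the eigenvalues of $g^{-1}h$, so that the eigenvalues of $h^{-1}g$ are $1/\mu_i$. Writing both sides of~\eqref{reflex} in these variables and substituting $\nu_i = 1/\mu_i$, the claim reduces after simplification to the elementary symmetric polynomial inequality
\begin{equation*}
 e_{n-1}(\nu) \leq n^{2-n}\, e_1(\nu)^{n-1},
\end{equation*}
which is a standard consequence of Maclaurin's inequality $\big(e_1(\nu)/n\big) \geq \big(e_{n-1}(\nu)/n\big)^{1/(n-1)}$ applied to the positive reals $\nu_i$.

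Part~(2) is then immediate from part~(1): reflexivity is clear since $K^2(g,g)=1$; symmetry follows by applying~\eqref{reflex} with the roles of $g$ and $h$ swapped, so that $g\sim h$ forces $K^2(h,g)$ to be bounded; and transitivity is precisely the content of~\eqref{submult}. The main (mild) obstacle I anticipate is arranging the frame in the proof of~\eqref{submult} so that the constant works out to exactly $n^n$ and no worse; once the correct basis is chosen, the rest is elementary linear algebra combined with AM--GM-type estimates.
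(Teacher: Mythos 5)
Your proposal is correct, and it reaches the same constants as the paper, but by a partly different route. For~\eqref{submult} the paper simply invokes submultiplicativity of the trace for products of positive definite matrices, $\tr{g^{-1}h\,h^{-1}k}\leq\tr{g^{-1}h}\tr{h^{-1}k}$, applied to $K^2(\cdot,\cdot)^{1/n}$; your choice of an $h$-orthonormal basis diagonalizing $h^{-1}k$ is just a self-contained proof of that same inequality, so this part is essentially identical in substance. For~\eqref{reflex} the two arguments genuinely diverge in method while converging on the same key estimate: the paper writes $A^{-1}=\mbox{Adj}(A)/\det{A}$ for $A=h^{-1}g$ and derives $\tr{\mbox{Adj}(A)}\leq n^{2-n}\tr{A}^{n-1}$ from Mirsky's Hilbert--Schmidt bound on the adjugate (via symmetric square roots), whereas you observe that in eigenvalue variables this is exactly $e_{n-1}(\nu)\leq n^{2-n}e_1(\nu)^{n-1}$ and get it from Maclaurin's inequality. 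Since $\tr{\mbox{Adj}(A)}=e_{n-1}(\lambda)$ and $\tr{A}=e_1(\lambda)$, the intermediate inequality is literally the same; your derivation is more elementary and avoids the external citation, while the paper's adjugate formulation is coordinate-free and dovetails with the matrix-norm machinery it uses elsewhere. Your handling of part~(2) (reflexivity from $K^2(g,g)=1$, symmetry from~\eqref{reflex}, transitivity from~\eqref{submult}) is exactly what the paper leaves implicit. No gaps.
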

\begin{proof}
Let $g,k$ and $h$ be Riemannian metrics on $M$. We have
\begin{align*}
 K^2(g,k)^{1/n}&= \frac{1}{n} \frac{\tr{g^{-1}k}}{\det{g^{-1}k}^{1/n}}= \frac{1}{n}\frac{\tr{g^{-1}hh^{-1}k}}{\det{g^{-1}hh^{-1}k}^{1/n}} \\
  & \leq\frac{1}{n}\frac{\tr{g^{-1}h}\tr{h^{-1}k}}{\det{g^{-1}h}^{1/n}\det{h^{-1}k}^{1/n}}=n K^2(g,h)^{1/n} K^2(h,k)^{1/n}.
\end{align*}
Here we have used the fact that trace is submultiplicative for a product of positive definite matrices. We have~\eqref{submult}.

To prove~\eqref{reflex}, we use some standard matrix results. The adjugate matrix $\mbox{Adj}(A)$ of an invertible matrix $A$ satisfies
\begin{equation*}
 A^{-1}=\frac{\mbox{Adj}(A)}{\det{A}}.
\end{equation*}
The fact that a symmetric positive definite matrix $A$ has a symmetric square root, applied to a norm estimate 
$$
||\mbox{Adj}(A)||_{HS}\leq n^{-(n-2)/2}||A||_{HS}^{n-1}
$$
of the adjugate matrix (see e.g.~\cite{Mirsky}), yields
\begin{align*}
 \tr{\mbox{Adj}(A)}&=\tr{(\mbox{Adj}(A))^{T/2}(\mbox{Adj}(A))^{1/2}}=\tr{(\mbox{Adj}(A^{1/2}))^T\mbox{Adj}(A^{1/2})} \\
  &=||\mbox{Adj}(A^{1/2})||^2_{HS} \leq n^{-(n-2)} ||A^{1/2}||_{HS}^{2(n-1)} \\
  &= n^{2-n} \tr{A^{T/2}A^{1/2}}^{n-1} = n^{2-n} \tr{A}^{n-1}.
\end{align*}
Here $||\cdot||_{HS}$ denotes the Hilbert-Schmidt norm. Applying the above formulas for $A=h^{-1}g$ yields
\begin{align*}
 K^2(g&,h)=\frac{1}{n^n}\frac{\tr{g^{-1}h}^n}{\det{g^{-1}h}}=\frac{1}{n^n}\frac{\tr{A^{-1}}^n}{\det{A^{-1}}}=\frac{1}{n^n}\frac{1}{\det{A}^n}\frac{\tr{\mbox{Adj}(A)}^n}{\det{A^{-1}}} \\
  &\leq {n^{-n}} n^{n(2-n)}\frac{(\tr{A}^{n-1})^n}{\det{A}^{n-1}} = n^{-n+n(2-n)+ n(n-1)} \bigg(\frac{(\frac{1}{n}\tr{A})^n}{\det{A}}\bigg)^{n-1} \\
  &\leq K^2(h,g)^{n-1}.
\end{align*}
The fact that the quasiconformality relation $\sim$ is an equivalence relation follows.
\end{proof}

We now return to the study of Riemannian quasiregular mappings. Previously we found that Riemannian quasiregular mappings between Euclidean spaces are quasiregular mappings on $\R^n$ and vice versa. On $\R^n$ the new and the standard definition of quasiregularity are equivalent. 

Next we consider a local version of this equivalence for general Riemannian manifolds. We show that if $\phi:(M,g)\rightarrow (N,h)$ is Riemannian quasiregular, then for any $p\in M$ there is a localization of $\phi$ at $p$ such that the coordinate representation (of the localization) is quasiregular mapping on $\R^n$. The quasiregularity constant of the coordinate representation will depend on $K$ and on the coordinate representations of the Riemannian metrics $g$ and $h$.

\begin{thm}\label{localization}
 Let $\phi$ be a Riemannian $K$-quasiregular mapping between Riemannian manifolds $(M,g)$ and $(N,h)$. Then, for any $p\in M$, there is a localization of $\phi$ at $p$ such that the coordinate representation of the localization is a quasiregular mapping on $\R^n$. The quasiregularity constant $\ol{K}$ of the localization depends only on $K$ and on the coordinate representations of $g$ on $U$ and $h$ on $V$
\end{thm}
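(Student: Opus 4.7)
The plan is to work entirely in coordinates and reduce the Riemannian distortion inequality to the Euclidean one by exploiting uniform spectral bounds on the matrix representations of $g$ and $h$ over a small enough neighborhood.

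First I would fix $p\in M$ and use localizability to pick oriented coordinate neighborhoods $(U,\{x^i\})$ of $p$ and $(V,\{y^i\})$ of $\phi(p)$ with $\phi(U)\subset\subset V$. Identifying $U$ and $V$ with their coordinate images in $\R^n$, the coordinate matrices $g(x)$ and $h(y)$ are continuous positive definite. Because $\overline{U}$ and $\overline{\phi(U)}\subset V$ are compact, there exist constants $0<\lambda\leq \Lambda$ (depending only on the coordinate representations of $g$ on $U$ and $h$ on $V$) such that $\lambda I\leq g(x)\leq \Lambda I$ on $\overline{U}$ and $\lambda I\leq h(y)\leq \Lambda I$ on $\overline{\phi(U)}$. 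Also, by the definition of Sobolev spaces adopted in Section~\ref{section_riemannian_def}, the coordinate representation of the localization already lies in $\W(U,V)\subset \W(U,\R^n)$, and the locally constant sign of the Riemannian Jacobian translates (with oriented charts) into a locally constant sign for the ordinary Jacobian $J_\phi=\det D\phi$.

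Next I would rewrite the Riemannian distortion inequality~\eqref{ieq} in these coordinates. Since
\begin{equation*}
 \Trg{\phi^*h}{g}=\frac{1}{n}\tr{g^{-1}D\phi^T h_\phi D\phi},\quad \Detg{\phi^*h}{g}=\frac{\det{h_\phi}}{\det{g}}J_\phi^2,
\end{equation*}
the hypothesis becomes $\tr{g^{-1}D\phi^T h_\phi D\phi}^n\leq n^n K^2\,(\det{h_\phi}/\det{g})\,J_\phi^2$ a.e.\ on $U$. Setting $M=D\phi^T h_\phi D\phi$, which is positive semidefinite, the standard fact that $\tr{AM}\geq \lambda_{\min}(A)\tr{M}$ for positive semidefinite $A,M$, applied twice with $A=g^{-1}$ and then with $A=h_\phi$ after cycling the trace, gives
\begin{equation*}
 \tr{g^{-1}D\phi^T h_\phi D\phi}\geq \frac{\lambda_{\min}(h_\phi)}{\lambda_{\max}(g)}\tr{D\phi^T D\phi}\geq \frac{\lambda}{\Lambda}\,\|D\phi\|_{HS}^2\geq \frac{\lambda}{\Lambda}\,\|D\phi\|_{op}^2.
\end{equation*}
Also $\det{h_\phi}/\det{g}\leq (\Lambda/\lambda)^n$ on $U$.

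Combining these with the Riemannian distortion inequality I would obtain
\begin{equation*}
 \|D\phi\|_{op}^{2n}\leq \left(\frac{\Lambda}{\lambda}\right)^n n^n K^2\,\frac{\det{h_\phi}}{\det{g}}\,J_\phi^2\leq n^n K^2\left(\frac{\Lambda}{\lambda}\right)^{2n} J_\phi^2,
\end{equation*}
so that $\|D\phi\|_{op}^n\leq \ol{K}\,|J_\phi|$ a.e.\ on $U$ with $\ol{K}=n^{n/2}K\,(\Lambda/\lambda)^n$, which depends only on $K$ and on the coordinate representations of $g$ on $U$ and $h$ on $V$ through the constants $\lambda,\Lambda$. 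Together with the fixed sign of $J_\phi$ and the Sobolev regularity already noted, this is exactly the statement that the coordinate representation of $\phi\colon U\to V$ is a standard $\ol{K}$-quasiregular mapping on $\R^n$.

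The only real content of the proof is the comparison in the second paragraph — everything else is bookkeeping with eigenvalue bounds — so the one thing I would be careful about is applying the PSD trace inequality correctly to $\tr{g^{-1}D\phi^T h_\phi D\phi}$, which requires cycling the trace so that at each step one of the two factors is positive semidefinite. This is routine, but it is where I would check the constants most carefully.
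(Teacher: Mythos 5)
Your argument is correct, but it takes a genuinely more elementary route than the paper. The paper's proof runs entirely through the distortion-function calculus: by continuity of the metrics it bounds $K(\psi^*I,g)\leq K_1$ on $U$ and $K(h,\varphi^*I)\leq K_2$ on $V$, and then chains the three comparisons (Euclidean to $g$, $g$ to $\phi^*h$, $\phi^*h$ to Euclidean) using the submultiplicativity~\eqref{submult} of Proposition~\ref{distortion_function} together with the functoriality identity~\eqref{functoriality}, arriving at $K^2(I,(\varphi\circ\phi\circ\psi^{-1})^*I)\leq n^{2n}K_1^2K^2K_2^2$; the passage to the standard operator-norm definition is then the $\R^n$ comparison recorded at the start of the section. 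You instead unwind everything into explicit spectral bounds $\lambda I\leq g\leq\Lambda I$, $\lambda I\leq h\leq\Lambda I$ on the relevant compact sets and derive the operator-norm inequality $||D\phi||_{op}^n\leq \ol{K}\,|J_\phi|$ directly; your trace estimates ($\tr{AM}\geq\lambda_{\min}(A)\tr{M}$ for positive semidefinite factors, applied after cycling the trace) are used correctly, and your constant $n^{n/2}K(\Lambda/\lambda)^n$ plays the same role as the paper's $n^{n/2}\cdot n^{n}K_1KK_2$. The underlying idea is identical --- on a compact coordinate patch a continuous metric is uniformly comparable to the Euclidean one --- but your version is self-contained linear algebra that bypasses Proposition~\ref{distortion_function} entirely, at the cost of not exercising the abstract machinery the paper reuses later (e.g.\ in parts (4) and (5) of Theorem~\ref{rqr}), while the paper's version is shorter once that machinery is in place.
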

\begin{proof}
Let $p\in M$. Choose a neighborhood $U$ and a coordinate neighborhood $(V,\varphi)$ around $p$ and $\phi(p)$ such that $\phi(U)\subset\subset V$. We can assume that $U$ is compactly contained in some coordinate neighborhood of $p$ whose coordinate mapping we denote by $\psi$. The coordinate representation of $\phi$ is of Sobolev class $\W(U,V)$. By the continuity of $g$ and $h$ and by compactness we have
\begin{equation*}
  K(\psi^*I,g)\leq K_1\mbox{ on $U$ and } K(h,\varphi^*I)\leq K_2 \mbox{ on } V. \\
\end{equation*}
It is sufficient to show that the distortion of the coordinate representation $\varphi\circ\phi\circ\psi^{-1}:U\to V$ of $\phi$ with respect to Euclidean metrics is bounded. By~\eqref{functoriality} and~\eqref{submult} we have
\begin{align*}
  K^2&(I,(\varphi\circ\phi\circ\psi^{-1})^*I) \leq n^{2n}K^2(I,(\psi^{-1})^*g)\, \\
 &\times K^2((\psi^{-1})^*g,(\phi\circ \psi^{-1})^*h)\,K^2((\phi\circ \psi^{-1})^*h,(\varphi\circ\phi\circ\psi^{-1})^*I) \\
  &=n^{2n}K^2(\psi^*I,g)|_{\psi^{-1}} K^2(g,\phi^*h)K^2(h,\varphi^*I)|_{\phi\circ \psi^{-1}}\leq n^{2n}K_1^2 K^2 K_2^2.
\end{align*}
Here we have used~\eqref{functoriality} and the chain rule of differentiation, valid for compositions of $C^1$ smooth and Sobolev mappings (see e.g.~\cite{Lieb}), to deduce that
$$
(\varphi\circ\phi\circ\psi^{-1})^*I=(\phi\circ\psi^{-1})^*\varphi^*I.
$$
\end{proof}

We are now ready to state the basic properties of Riemannian quasiregular mappings. The following theorem considers mostly the analytic properties of quasiregular mappings, which all follow from the well-known corresponding statements of quasiregular mappings on $\R^n$. The theorem in particularly states that the standard formulas for derivatives and integration of smooth mappings are valid a.e. for Riemannian quasiregular and Riemannian quasiconformal mappings. 

\begin{thm}\label{rqr}
 Let a mapping $\phi:(M,g)\to (N,h)$ be Riemannian $K$-quasiregular and assume that $g$ and $h$ are continuous. Then the following hold:
\begin{enumerate}
\item The mapping $\phi$ is differentiable a.e., and at the points where the differential exists, it coincides with the weak differential. The mapping $\phi$ can be redefined on a set of measure zero to be continuous.
\item Let $u\in \W(N)$. Then $\phi^*u=u\circ \phi$ is of Sobolev class $\W(M)$ and $\phi^*u$ satisfies a.e. the chain rule of derivation:
\begin{equation}\label{chain_rule}
 \p_i(u\circ\phi)=\p_a u|_\phi \p_i\phi^a \mbox{ i.e. }  d\phi^*u=\phi^*du.
\end{equation}
Moreover,
\begin{equation*}
|d(u\circ\phi)|^n_g\leq n^{n} K\, \emph{Det}\,(D\phi)\phi^*(|du|_h^n) \mbox{ a.e.}
\end{equation*}
Here $|\cdot|_g$ and $|\cdot|_h$ are the norms induced by $g$ and $h$ on $1$-forms.
\item If the mapping $\phi$ is non-constant (and redefined to be continuous), then it is open and discrete. The Riemannian Jacobian determinant
\begin{equation*}
 \emph{Det}\,(D\phi)=\sqrt{\emph{det}\,(g^{-1}\phi^*h)}\in L^1_{loc}(M)
\end{equation*}
of $\phi$ is non-vanishing a.e. in this case.
\item If $\psi:(N,h)\to (L,k)$ is another Riemannian quasiregular mapping, with quasiregularity factor $K'$, then the composition $\psi\circ\phi$ is $n^{n/2}K'K$-quasiregular. Moreover, the chain rule holds for the differential of the composition:
\begin{equation}\label{map_combo}
 D(\psi\circ\phi)=D\psi|_\phi\circ D\psi \mbox{ a.e.}
\end{equation}
In particular, we have
\begin{equation}\label{pullback}
  (\psi\circ\phi)_*U=\psi_*\phi_*U \mbox{ and }  (\psi\circ\phi)^*T=\phi^*\psi^*T
\end{equation}
holding a.e. for any vector (field) $U$ of $M$ and covariant tensor (field) $T$ of $L$.
\item If $\phi$ is in addition a homeomorphism, and thus $K$-quasiconformal, then its inverse is $K^{n-1}$-quasiconformal. Also, integration by substitution is valid:
\begin{equation*}
 \int_M f\circ \phi\, \emph{Det}\,(D\phi)\,d\mu_g=\int_{N} f d\mu_h.
\end{equation*}
Here $f$ is any integrable function on $N$.
\end{enumerate}
\end{thm}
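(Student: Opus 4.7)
The uniform strategy is to reduce each assertion to its well-known counterpart for quasiregular mappings on $\R^n$ via Theorem~\ref{localization}: at every $p\in M$ the coordinate representation $\ol\phi$ of an appropriate localization of $\phi$ is $\ol K$-quasiregular in the standard sense. Since every statement of the theorem is either coordinate invariant or transforms predictably under the smooth transition maps, the classical results for $\ol\phi$ transfer to $\phi$. Items (1) and (3)---differentiability a.e., agreement of classical and weak differentials, openness, discreteness, a continuous representative, and non-vanishing Jacobian a.e.---are then immediate from the Reshetnyak--Rickman theory together with the chain rule for compositions with the $C^1$ transition maps, using the pointwise identity $\Det{D\phi}=|J_{\ol\phi}|\sqrt{\det{h|_\phi}/\det{g}}$ in local coordinates to pass between the Euclidean and invariant Jacobians.

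For (2), the identity $d\phi^*u=\phi^*du$ is the usual chain rule for a $\W$-function composed with a $\W$-mapping, applied in coordinates on a localization. To prove the pointwise bound, at an a.e.\ point I would choose a $g$-orthonormal frame of $T_pM$ diagonalizing $g^{-1}\phi^*h$ with eigenvalues $\lambda_1,\dots,\lambda_n$ and an $h$-orthonormal frame of $T_{\phi(p)}N$; a direct computation then yields
\begin{equation*}
 |d(u\circ\phi)|_g^2\leq (\max_i\lambda_i)\,|du|_h^2|_\phi\leq n\,\Tr{\phi^*h}\,|du|_h^2|_\phi.
\end{equation*}
Raising to the power $n/2$ and invoking $\Tr{\phi^*h}^{n/2}\leq K\,\Det{D\phi}$---a rearrangement of the defining distortion inequality~\eqref{ieq}---yields the claim, with room to spare relative to the stated constant $n^nK$.

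For (4), the composition bound is precisely the submultiplicativity in Proposition~\ref{distortion_function}: by \eqref{functoriality} and \eqref{submult},
\begin{equation*}
 K^2(g,(\psi\circ\phi)^*k)\leq n^nK^2(g,\phi^*h)K^2(h,\psi^*k)|_\phi\leq n^nK^2(K')^2,
\end{equation*}
so $\psi\circ\phi$ is $n^{n/2}KK'$-quasiregular; the chain rule and \eqref{pullback} then come from the $\R^n$ chain rule applied to coordinate representations. For (5), the inverse constant $K^{n-1}$ follows from \eqref{reflex} after switching the arguments via \eqref{functoriality}:
\begin{equation*}
 K^2(h,(\phi^{-1})^*g)=K^2(\phi^*h,g)|_{\phi^{-1}}\leq K^2(g,\phi^*h)^{n-1}|_{\phi^{-1}}\leq K^{2(n-1)}.
\end{equation*}
Integration by substitution then follows by combining the identity for $\Det{D\phi}$ noted above with the classical Euclidean change of variables for $\ol\phi$ and a partition of unity.

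The principal obstacle I anticipate is ensuring that the composition in (4) actually lies in $\W$, so that the distortion inequality is meaningful for it: this requires the $\R^n$ composition theorem for quasiregular mappings, which rests substantively on the openness and discreteness from (3) and on the area formula. A secondary bookkeeping concern is keeping the volume-form conversions in (5) consistent so that the substitution formula comes out in the stated coordinate-invariant form.
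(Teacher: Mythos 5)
Your proposal is correct and follows essentially the same route as the paper: localize via Theorem~\ref{localization}, import the Reshetnyak--Rickman--Iwaniec theory on $\R^n$ for (1)--(3), and run the distortion-function algebra of~\eqref{functoriality}, \eqref{submult} and~\eqref{reflex} for (4)--(5), with your eigenvalue estimate in (2) being the same trace bound in slightly different clothing. The one obstacle you flag---membership of $\psi\circ\phi$ in $\W$---is resolved in the paper exactly by the tool you name: part (2) of the theorem (the $\R^n$ composition result, \cite[Thm.\ 16.13.3]{Iwaniec}) applied to the coordinate functions $z^i\circ\psi$.
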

\begin{proof}
Since $\phi$ is localizable, we can choose atlases $\{U_\alpha\}_{\alpha\in \mathbb{N}}$ and $\{V_\beta\}_{\beta\in \mathbb{N}}$ of $M$ and $N$ such that for any $\alpha\in \mathbb{N}$ there is $\beta\in \mathbb{N}$ such that $\phi(U_\alpha)\subset\subset V_\beta$. For any $\alpha$ and a corresponding $\beta$, the coordinate representation of the restriction $\phi_{\a\b}:U_\alpha\to V_\beta$ is a quasiregular mapping on $\R^n$ by the previous theorem.

$(1)-(3)$: With respect to the Lebesgue measure on $\R^n$, the coordinate representation $\phi_{\a\b}$ is differentiable a.e., and, at the points where the differential exists, the differential coincides with the weak differential~\cite[p. 84]{Reshetnyak}. 

Any coordinate representation of any localization of $\phi$ can be redefined on a set of measure zero such that the resulting localization is continuous~\cite[Ch. 7]{Iwaniec}. Via the $C^1$ coordinate charts, the same holds for the actual localizations. The redefinitions made in different local coordinates agree: Let $\phi_1$ and $\phi_2$ be continuous redefinitions of $\phi$ on $U_1\subset M$ and on $U_2\subset M$ with $U_1\cap U_2\neq \emptyset$. Assume that there exists $q\in U_1\cap U_2$ such that $\phi_1(q)\neq \phi_2(q)$,
$$
|\phi_1(q)-\phi_2(q)|=\e>0.
$$
Here $|\cdot|$ is the Euclidean norm in some local coordinates of $N$, whose domain of definition contains the images of some sufficiently small neighborhood of $q$ in both redefinitions of $\phi$. There is a neighborhood $U$ of $q$ such that
$$
|\phi_1(q)-\phi_1(p)|<\e/2 \mbox{ and } |\phi_2(q)-\phi_2(p)|<\e/2
$$
holds for all $p\in U$. Since $\phi_1$ and $\phi_2$ agree a.e., there exists $p'\in U$ with $\phi_1(p')=\phi_2(p')$. We have
$$
|\phi_1(q)-\phi_2(q)|\leq |\phi_1(q)-\phi_1(p')|+ |\phi_2(p')-\phi_2(q)|<\e,
$$
which is a contradiction.

If $\phi$ is non-constant and continuous, then its localizations are open and discrete; see e.g.~\cite{Rickman}. Therefore, $\phi$ itself is open and discrete. The Jacobian determinant of $\phi_{\a\b}$ is non-vanishing a.e. in this case~\cite[Thm. 16.10.1]{Iwaniec}. The chain rule holds by~\cite[Thm. 16.13.3]{Iwaniec}.

Let us denote by $\langle\cdot,\cdot\rangle$ the standard inner product of vectors in $\R^n$. We have
\begin{align*}
 |d(&u\circ\phi)|^2_g=\langle D\phi^Tdu|_\phi , g^{-1} D\phi^Tdu|_\phi\rangle =  \langle du|_\phi, h^{-1}|_\phi (h|_\phi D\phi g^{-1} D\phi^T) du|_\phi\rangle \\
  &\leq \tr{D\phi g^{-1} D\phi^T h|_\phi} |du|_h^2\circ\phi = \tr{g^{-1} \phi^*h} |du|_h^2\circ\phi  \\
  &\leq n^{n/2} K^{2/n} \Detg{\phi^*h}{g}^{1/n} |du|_h^2\circ\phi.
\end{align*}
In the second line we have used the fact that the matrix $h|_\phi D\phi g^{-1} D\phi^T$ has positive eigenvalues a.e. We have proven $(1)-(3)$.

$(4)$: First we have to show that the composition $\psi\circ\phi$ is localizable. Let $p\in M$. By the definition of the localizability there exist an open neighborhood $V$ of $\phi(p)$ and a coordinate neighborhood $(W,\{z^i\})$ containing $\psi(\phi(p))$ such that $\psi(V)\subset\subset W$. Since $\phi$ can be assumed continuous by part $(1)$, we have that $U:=\phi^{-1}(V)$ is open. Thus $\psi\circ\phi$ maps a neighborhood $U$ of $p$ into a coordinate neighborhood $W$ of $\psi(\phi(p))$ with $(\psi\circ\phi)(U)\subset\subset W$. That $\psi\circ\phi$ has locally constant sign follows from chain rule~\eqref{chain_rule}.

By the definition of the Sobolev space $\W(M,N)$, the component mapping $\psi^i=z^i\circ\psi$ is of Sobolev class $\W(V)$. By part $(2)$ of this theorem, the composition $\psi^i\circ\phi$ belongs to the Sobolev space $\W(U)$ and satisfies
\begin{equation*}
 \p_j(\psi^i\circ\phi)=\p_k\psi^i|_\phi\, \p_j\phi^k \mbox{ a.e.}
\end{equation*}
It follows that $\psi\circ\phi\in \W(M,L)$ and that the chain rule holds a.e. The pushforward and pullback formulas are consequences of the chain rule. 

The distortion inequality follows from
\begin{align*}
 K^2(g,(\psi\circ\phi)^*k)&\leq n^n K^2(g,\phi^*h)K^2(\phi^*h,\phi^*\psi^*k) \\
  &\leq n^n K^2 K^2(h,\psi^*k)|_\phi\leq n^nK^2 (K')^{2}
\end{align*}
holding a.e. Here we have applied the pullback formula~\eqref{pullback} with $T=k$, the inequality~\eqref{submult}, and the fact that the Jacobian determinant of $\phi$ is non-vanishing a.e. The latter implies that~\eqref{functoriality} holds a.e.

$(5)$: The inverse of $\phi$ is localizable as a continuous mapping. From the theory of quasiconformal mappings on $\R^n$ it follows that the inverse is of Sobolev class $\W(N,M)$; see e.g.~\cite[p.215]{Reshetnyak}.  We use Proposition~\ref{distortion_function} to calculate the distortion of $\phi^{-1}$:
\begin{align*}
 K^2(h,\phi^{-1*}g)&=\frac{\Trg{\phi^{-1*}g}{h}^n}{\Detg{\phi^{-1*}g}{h}}=\frac{\tr{h^{-1}D(\phi^{-1})^Tg|_\phi D(\phi^{-1})}}{\det{h^{-1}D(\phi^{-1})^Tg|_\phi D(\phi^{-1})}} \\
  &=\l.\frac{\tr{(D\phi^T h|_\phi D\phi)^{-1}g}}{\det{(D\phi^T h|_\phi D\phi)^{-1}g}}\r|_{\phi^{-1}}=K(\phi^*h,g)|_{\phi^{-1}} \\
  &\leq K^2(h,\phi^*g)^{n-1}|_{\phi^{-1}} \leq (K^2)^{n-1}.
\end{align*}
Here we have used the chain rule~\eqref{chain_rule} for the composition of a $\W$ mapping $\phi^{-1}$ and the quasiconformal mapping $\phi$ together with the fact that the Jacobian determinant of $\phi$ is non-vanishing a.e. to deduce that
\begin{equation*}
 (D\phi)^{-1}=(D\phi^{-1})|_\phi \mbox{ a.e.}
\end{equation*}
We conclude that $\phi^{-1}$ is quasiconformal.

For the integration by substitution formula we choose a partition of unity subordinate to $\{U_\a\}$ and apply~\cite[p.99]{Reshetnyak}. We have
\begin{align*}
 \int_M &f\circ \phi\, \Det{D\phi}d\mu_g=\sum_\a \int_{U_\a} \varphi_\a\, f\circ \phi\, |J_\phi|\, \det{h}|_\phi^{1/2} dx^n \\
  &= \sum_\a \int_{\phi(U_\a)} f \varphi_\a\circ \phi^{-1}\det{h}^{1/2}dy^n= \int_{N} f d\mu_h.
\end{align*}
\end{proof}


\subsection{A convergence theorem}\label{convsection}
We continue with a convergence theorem for Riemannian quasiregular mappings. For this, we first define a natural topology for Riemannian quasiregular mappings.

For quasiregular mappings on $\R^n$ the natural topologies are the weak Sobolev $\W$ topology and the compact-open topology. A sequence of $K$-quasiregular mappings converging in either of these topologies converge to a quasiregular mapping~\cite{Iwaniec,Rickman} with the same quasiregularity factor $K$. The analogue of (weak) $\W$ convergence of mappings in $W^{1,n}(M,N)$ we use is the following.
\begin{definition}
A sequence $\{\phi_i\}$ of Sobolev $W^{1,n}(M,N)$ mappings \emph{converge (weakly)} to $\phi:M\to N$ in $W^{1,n}(M,N)$ if $\phi$ is localizable and if for any localization $\phi:U\to V$ there is $N_U$ such that
$$
\phi_i(U)\subset\subset V
$$
for all $i\geq N_U$ and $\phi_i:U\to V$ converge (weakly) to $\phi:U\to V$ in $W^{1,n}(U,V)$. The convergence in $\W(M,N)$ is defined analogously.
\end{definition}
As the definition above suggests, to infer additional properties for the limit mapping $\phi$ we must a priori know that the sequence and the limit mapping can be localized simultaneously. Therefore we introduce the following concept. We call a sequence $\{\phi_i\}$ of mappings $M\to N$ \emph{uniformly localizable} with respect to a mapping $\phi:M\to N$ if $\phi$ is localizable and if for any localization $\phi:U\to V$ there is $N_U\in\N$ such that $\phi_i(U)\subset\subset V$ for $i\geq N_U$.

Let $d$ and $e$ be the distance metrics induced by the continuous Riemannian metrics $g$ and $h$ on $M$ and $N$. It is well known that for smooth Riemannian metrics the topologies induced by $d$ and $e$ coincide with the original topologies of the manifolds. The same is true also for continuous Riemannian metrics. This can be seen from the proof of this fact in the smooth case presented in~\cite[Thm. 1.18]{Aubin}. We have the following.
\begin{lemma}
 Let $\{\phi_i\}$ be a sequence of continuous mappings $(M,d)\to (N,e)$ converging uniformly on compact sets to $\phi:M\to N$. Then $\{\phi_i\}$ is uniformly localizable with respect to $\phi$. 

\end{lemma}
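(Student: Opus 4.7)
The plan is to establish that $\phi$ is itself localizable and then verify the uniform-inclusion condition via a standard ``thickening of the compact image'' argument carried out in the distance metric $e$.

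First, one observes that a uniform-on-compacta limit of continuous mappings on the locally compact manifold $M$ is continuous, so $\phi$ is continuous; since every continuous mapping between manifolds is localizable (a remark made after Definition~\ref{idef}), $\phi$ is localizable.

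Next, fix a localization $\phi:U\to V$, which one may take to have $\overline{U}$ compact (shrinking $U$ preserves the property $\phi(U)\subset\subset V$, so this is harmless). Put $K:=\overline{\phi(U)}$, a compact subset of the open set $V\subset N$. By local compactness of $N$, interpose an open set $W$ with $K\subset W\subset\overline{W}\subset V$ and $\overline{W}$ compact. Because $e$ induces the original manifold topology on $N$---this is exactly the fact invoked immediately before the lemma and requires continuity of $h$---the quantity $\delta:=\inf\{e(y,z):y\in K,\ z\in N\setminus W\}$ is strictly positive, as it is the distance between a compact set and a disjoint closed set. Uniform convergence of $\phi_i$ to $\phi$ on the compact set $\overline{U}$ then yields an index $N_U$ with $\sup_{x\in\overline{U}} e(\phi_i(x),\phi(x))<\delta$ for $i\geq N_U$. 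For such $i$ and any $x\in U$, the inclusion $\phi(x)\in K$ combined with the above estimate gives $\phi_i(x)\in W$; hence $\phi_i(U)\subset W$, and $\overline{\phi_i(U)}\subset\overline{W}$ is compact in $V$, so $\phi_i(U)\subset\subset V$ as required.

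The one genuine subtlety is the passage between the metric topology and the manifold topology, handled by the continuity of the Riemannian metric $h$. Once this identification is in place, the rest is the familiar observation that uniform convergence on a compact set pushes the images into any prescribed open tube around the limit image, applied here in the coordinate-free setting.
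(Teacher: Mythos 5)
Your proof is correct and follows essentially the same route as the paper's: continuity of the uniform limit gives localizability, and then uniform convergence on a compact set pushes $\phi_i(U)$ into a positive-distance neighbourhood of $\overline{\phi(U)}$ inside $V$, using that $e$ induces the manifold topology. The only cosmetic difference is that the paper works with a small metric ball $\overline{B}(p,\delta)$ and the distance $D=\mathrm{dist}_e(\phi(p),\partial V)$, whereas you thicken the whole compact image $\overline{\phi(U)}$ for an arbitrary relatively compact $U$; both arguments carry the same content.
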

\begin{proof}
By the uniform convergence, the limit mapping $\phi$ is continuous and thus localizable. Let $p\in M$ and choose a neighborhood $U$ of $p$ and a coordinate neighborhood $V$ of $\phi(p)$ such that $\phi(U)\subset\subset V$. Choose $\d>0$ such that
\begin{equation*}
 e(\phi(x),\phi(p))\leq D/4
\end{equation*}
if $x\in \ol{B}(p,\d)$. Here $D$ is the distance $\mbox{dist}_e(\phi(p),\p V)$. Since $\phi_i\to \phi$ uniformly on $\ol{B}(p,\d)$, we can choose an integer $N$ such that
\begin{equation*}
 e(\phi_i(x),\phi(x))\leq D/4
\end{equation*}
for all $x\in\ol{B}(p,\d)$ and $i\geq N$. Now, let $x\in B(p,\d)$ and let $i\geq N$. We have
\begin{equation*}
 e(\phi_i(x),\phi(p))\leq e(\phi_i(x),\phi(x))+e(\phi(x),\phi(p))\leq D/2.
\end{equation*}
\end{proof}

As noted before, a sequence of $K$-quasiregular mappings on $\R^n$ converging weakly in $\W$ converges to a $K$-quasiregular mapping~\cite{Iwaniec}. It is also a fact that a sequence of quasiregular mappings on $\R^n$ converging uniformly on compact sets converges weakly on $\W$~\cite{Rickman}. These remarks together with the previous lemma allow us to prove a natural convergence theorem for Riemannian quasiregular mappings.

\begin{thm}\label{uniform_conv}
 Let a sequence $\{\phi_i\}$ of Riemannian $K$-quasiregular mappings $(M,g,d)\to (N,h,e)$ converge uniformly on compact sets to $\phi$. Then $\phi$ is Riemannian $K$-quasiregular.
\end{thm}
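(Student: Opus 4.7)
The plan is to localize using the preceding lemma, reduce to a Euclidean statement via Theorem~\ref{localization}, and then pass to the limit in an equivalent square-root form of the distortion inequality whose right-hand side becomes linear in the Jacobian.

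\textbf{Localization and Sobolev regularity.} The preceding lemma gives that $\{\phi_i\}$ is uniformly localizable with respect to $\phi$, so fix $p\in M$ and a coordinate localization $\phi:U\to V$ with $\phi_i(U)\subset\subset V$ for $i\geq N_U$. By Theorem~\ref{localization}, each coordinate representation $\phi_i:U\to V$ is a Euclidean $\ol{K}$-quasiregular mapping on $\R^n$ with $\ol{K}$ independent of $i$, depending only on $K$ and on the coordinate expressions of $g$ on $U$ and $h$ on $V$. Caccioppoli-type estimates for quasiregular mappings on $\R^n$ together with the uniform $L^\infty$ bound coming from $\phi_i(U)\subset\subset V$ make the sequence bounded in $\W(U)$, and uniform convergence identifies the weak $\W(U)$ limit as the coordinate representation of $\phi$. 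Thus $\phi\in \W(M,N)$ and $D\phi_i\rightharpoonup D\phi$ weakly in $L^n_{loc}(U)$. Higher integrability of Jacobians for quasiregular mappings on $\R^n$ additionally provides weak $L^{1+\e}_{loc}$ convergence of $J_{\phi_i}$ to $J_\phi$, so the common local sign of $J_{\phi_i}$ is preserved in the limit.

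\textbf{Passing the distortion inequality to the limit.} Since both sides of~\eqref{ieq} are non-negative, the Riemannian distortion inequality is equivalent to its square root, which in local coordinates reads
\begin{equation*}
\Trg{\phi^*h}{g}^{n/2}\leq K\, |J_\phi|\, \sqrt{\det{h|_\phi}/\det{g}} \mbox{ a.e.}
\end{equation*}
As a function of the matrix variable $D\phi$, the left-hand side equals $\big(\tfrac{1}{n}g^{ij}h_{ab}|_\phi\p_i\phi^a\p_j\phi^b\big)^{n/2}$, a positive semidefinite quadratic form raised to the convex non-decreasing power $n/2\geq 1$, hence convex in $D\phi$ with coefficients depending continuously on $\phi$ through $h|_\phi$. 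Since $\phi_i\to\phi$ uniformly and $h$ is continuous, $h|_{\phi_i}\to h|_\phi$ uniformly on compact subsets of $U$; combined with weak $L^n$ convergence of $D\phi_i$, a standard Ioffe-type semicontinuity theorem yields
\begin{equation*}
\int_\Omega \Trg{\phi^*h}{g}^{n/2}d\mu_g\leq \liminf_i \int_\Omega \Trg{\phi_i^*h}{g}^{n/2}d\mu_g
\end{equation*}
for every $\Omega\subset\subset U$. On the right-hand side of the square-rooted inequality, the common-sign hypothesis makes $|J_{\phi_i}|=sJ_{\phi_i}$ linear in $J_{\phi_i}$; together with uniform convergence of the continuous weight $\sqrt{\det{h|_{\phi_i}}/\det{g}}$ and weak $L^{1+\e}_{loc}$ convergence of $J_{\phi_i}$, the corresponding integrals over $\Omega$ converge. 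Integrating the inequality for $\phi_i$ over $\Omega$ and taking $\liminf$ produces the integrated inequality for $\phi$; since $\Omega\subset\subset U$ is arbitrary, Lebesgue differentiation promotes this to the pointwise distortion inequality a.e., completing the proof.

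\textbf{Main obstacle.} The central difficulty is the nonlinearity of~\eqref{ieq} under weak $L^n$ convergence of $D\phi_i$: in its original form the left-hand side $\Trg{\phi^*h}{g}^n$ is not convex in $D\phi$ and the right-hand side $\Detg{\phi^*h}{g}$ depends quadratically on $J_\phi$, so neither side behaves well under weak $L^n$ limits. Taking square roots restores convexity of the left-hand side in $D\phi$ and linearity of the right-hand side in $J_\phi$ via the common-sign hypothesis, reducing the step to standard weak-strong lower semicontinuity combined with weak continuity of the Jacobian.
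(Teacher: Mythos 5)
Your proposal is correct and follows essentially the same route as the paper: localize via the lemma and Theorem~\ref{localization}, obtain weak $W^{1,n}$ convergence of the coordinate representations, and pass to the limit in the square-rooted distortion inequality by combining weak lower semicontinuity of the convex trace term with convergence of the Jacobian term against the uniformly convergent continuous weight. The only differences are cosmetic substitutions of standard tools (Ioffe's semicontinuity theorem in place of the paper's direct dual-pairing argument, and higher integrability of Jacobians in place of distributional convergence plus approximation), together with the minor omission of the explicit passage to a subsequence on which all $J_{\phi_i}$ share one sign.
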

\begin{proof}
Since Riemannian quasiregular mappings are continuous, $\phi$ is localizable by the previous lemma. Let $p\in M$ and let $\phi:U\to V$ be a localization of $\phi$ at $p$. Applying the previous lemma again, we may assume without loss of generality that $\ol{U}$ is compact and that $\phi(U),\phi_i(U)\subset\subset V$.

Let us first show that $\phi:U\to V$ is of Sobolev class $\W(U,V)$. The coordinate representations of $\phi_i$ are $\ol{K}$-quasiregular mappings on $\R^n$ (by Thm.~\ref{localization}) converging uniformly on compact sets to the coordinate representation of $\phi:U\to V$. It follows from the proof of Theorem 8.6 in~\cite{Rickman} that the coordinate representations converge weakly in $W^{1,n}(U,V)$ to a $\ol{K}$-quasiregular mapping on $\R^n$. Thus $\phi\in W^{1,n}(U,V)$, and since $U$ was arbitrary, we have $\phi\in \W(M,N)$. 

By passing to a subsequence we may assume that either $J_{\phi_i}\geq 0$ or $J_{\phi_i}\leq 0$ a.e. on $U$ for all $i$. We may further assume the first condition, $J_{\phi_i}\geq 0$ a.e. on $U$ for all $i$, for the proof of the other case is analogous. Since the limit mappings $\phi:U\to V$ is quasiregular on $\R^n$ we have $J_\phi \geq 0$ a.e. on $U$. It follows that $J_\phi$ has locally constant sign.  

It remains to show that the distortion inequality
\begin{equation*}
 ||D\phi||^n\leq K\, \Det{D\phi}
\end{equation*}
holds a.e. for the localization $\phi:U\to V$. We prove this by modifying the proof of the analogous theorem for quasiregular mappings on $\R^n$~\cite[Thm. 8.7.1.]{Iwaniec}. We begin by showing the lower semicontinuity of the operator norm,
\begin{equation}\label{lowersemi}
\int_U ||D\phi||^n dx  \leq \liminf_{i} \int_U ||D\phi_i||^n dx.
\end{equation}
We use a modification of a standard argument~\cite[Thm. 2.11]{Lieb}. Let us first show that the weak convergence of $D\phi_i$ in $L^{n}(U)$ implies that $(h\circ\phi_i)^{1/2}D\phi_i$ converges weakly to $(h\circ\phi)^{1/2} D\phi$ in $L^n(U)$. 

Let $\psi\in L^{n}(U)^*$ and denote $k_i=(h\circ \phi_i)^{1/2}$ and $k=(h\circ \phi)^{1/2}$. We have
\begin{align*}
 \l| \int_U\r.&\l. \psi k_i D\phi_i-\int_U \psi k D\phi\r|\leq \l| \int_U \psi (k_i-k) D\phi_i+ \int_U \psi k (D\phi_i-D\phi) \r| \\
  &\leq \l| \int_U \psi (k_i-k) D\phi_i\r|+ \l|\int_U \psi k (D\phi_i-D\phi) \r| \\
  &\leq ||k_i-k||_{C(U)}\int_U |\psi| |D\phi_i | + \l|\int_U \psi k (D\phi_i-D\phi) \r| \\
  &\leq  ||k_i-k||_{C(U)} \l(\int_U |\psi|^{n/(n-1)}\r)^{\frac{n-1}{n}} \l(\int_U |D\phi_i |^n\r)^{\frac{1}{n}}  \\
  &+  \l|\int_U \psi k (D\phi_i-D\phi) \r|.
\end{align*}
Here the norms $|\cdot|$ are understood as the Hilbert-Schmidt norms where applicable. We have used H\"older's inequality in the ultimate inequality. Since $k_i\to k$ uniformly and the weakly convergent sequence $D\phi_i$ is bounded, we conclude that $k_iD\phi_i\to kD\phi$ weakly in $L^n(U)$. 

To simplify the following argument, let us now denote $h_i=h\circ\phi_i$ and, with a slight abuse of notation, let us also denote $h=h\circ\phi$. Consider the linear functional $L$ on the space of $L^n$ integrable $n\times n$ matrices $L^n(U,\R^{n\times n})$ defined by
\begin{equation*}
 L(T)=\int_U ||D\phi||^{n-2}\tr{g^{-1} D\phi^T h^{1/2} T}, 
\end{equation*}
where $T\in L^n(U,\R^{n\times n})$. Since $||D\phi||^{n-2}g^{-1}h^{1/2}\in L^{n/(n-1)}(U,\R^{n\times n})$, the dual of $L^n(U,\R^{n\times n})$, the functional $L$ is continuous by H\"older's inequality. By the weak convergence of $(h\circ\phi_i)^{1/2}D\phi_i$ we therefore have
\begin{align*}
 \int_U ||D\phi||^n&=\lim L(h_i^{1/2}D\phi_i)=\lim \int_U ||D\phi||^{n-2}\tr{g^{-1} D\phi^T h^{1/2} h_i^{1/2} D\phi_i} \\
  &\leq \liminf \int_U ||D\phi||^{n-2}\tr{g^{-1} D\phi^T h D\phi}^{1/2} \tr{g^{-1} D\phi_i^T h_i D\phi_i}^{1/2} \\
  & =\liminf \int_U ||D\phi||^{n-1} ||D\phi_i|| \\
  &\leq \l(\int_U ||D\phi||^{n}\r)^{\frac{n-1}{n}}  \liminf  \l(\int_U ||D\phi_i||^n\r)^{\frac{1}{n}}.
\end{align*}
Here in the second line we have used the Cauchy-Schwartz inequality for the Hilbert-Schmidt inner product of matrices and in the last line we have used H\"older's inequality. Diving by 
$$
\l(\int_U ||D\phi||^{n}\r)^{\frac{n-1}{n}}
$$
shows the lower semicontinuity of the operator norm~\eqref{lowersemi}. We also note for later use that the same argument applies if we multiply both of the integrands of the inequality~\eqref{lowersemi} by any positive test function.

We show next that, for any positive test function $\varphi\in C_c^\infty(U)=\mathcal{D}$, we have
\begin{equation}\label{J_conv}
\int_U \varphi\, \Det{D\phi_i}\to \int_U \varphi\, \Det{D\phi}.
\end{equation}
Since $J_{\phi_i}\geq 0$ a.e. on $U$, we have $\Det{D\phi_i}=\det{g^{-1}h|_{\phi_i}}^{1/2} J_{\phi_i}$. The sequence $\det{g^{-1}h|_{\phi_i}}\in C(U)$ converges to $\det{g^{-1}h|_\phi}$ uniformly on $U$. For simplicity, let us denote $a_i=\det{g^{-1}h|_{\phi_i}}^{1/2}$ and $a=\det{g^{-1}h|_{\phi}}^{1/2}$. We have $J_{\phi_i}\to J_{\phi}$ in $\mathcal{D}'$ by~\cite[Thm. 8.2.1]{Iwaniec}. It follows that 
$$
\int_U\varphi\,J_{\phi}, \int_U\varphi\,J_{\phi_i}< D
$$
for all $i$ large enough for some finite constant $D$. Let $\e>0$ and choose $a^*\in C_c^\infty(U)$ such that $||a^*-a||_{C(U)}< \e/D$. The standard approximation technique yields 
\begin{align*}
  \l|\int_U\r. &\l.\varphi \, \Det{D\phi_i} - \int_U \varphi \, \Det{D\phi} \r|=\l|\int_U \varphi\, a_i\,J_{\phi_i}-\int_U \varphi\, a\,J_{\phi}\r| \\
  &\leq \l|\int_U \varphi\,(a_i-a) J_{\phi_i} \r| + \l|\int_U \varphi\, (a-a^*) J_{\phi_i} \r|  +  \l|\int_U \varphi\,a^*(J_{\phi_i}-J_\phi)\r|  \\
  & +  \l|\int_U \varphi\, (a^*-a)J_\phi \r| \leq  ||a_i-a||_{C(U)} \int_U\varphi\,J_{\phi_i} + ||a-a^*||_{C(U)} \int_U\varphi\,J_{\phi_i} \\
  &+  \l|\int_U \varphi\,a^*(J_{\phi_i}-J_\phi)\r| + ||a^*-a||_{C(U)}\int_U\varphi\,J_{\phi}.          
\end{align*}
Taking the limit $i\to \infty$ shows that the right hand side is at most $2\e$. Since $\e$ was arbitrary, we have~\eqref{J_conv}.

Finally, the distortion inequality follows from~\eqref{lowersemi} and~\eqref{J_conv} since
\begin{align*}
  \int_U &\varphi\, ||D\phi||^n\leq \liminf \int_U \varphi\, ||D\phi_i||^n \\
  &\leq K\,\lim \int_U \varphi\, \Det{D\phi_i} = K \int_U \varphi\, \Det{D\phi}
\end{align*}
for all positive test functions $\varphi$.
\end{proof}

In this section we studied the basic properties of Riemannian quasiregular mappings. We proceed to study how Riemannian quasiregular mappings act on the space of conformal structures.

\chapter{An invariant conformal structure of a quasiconformal group}
We apply the Riemannian definition of quasiconformal mappings and the results of the previous sections. We study the action of a group of quasiconformal mappings on the space of Riemannian metrics on a manifold. The result we give is a generalization, in the countable case, of a result by Tukia~\cite[Thm. F]{Tukia}:
\begin{thmnonum}\label{tukia_thm}
 Let $\Gamma$ be a quasiconformal group of mappings $\phi:U\rightarrow U$, $U$ open in $\R^n \cup \{\infty\}$. Then there is a $\Gamma$-invariant conformal structure $G$ on $U$.
\end{thmnonum}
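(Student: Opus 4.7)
The plan is to realise conformal structures as sections of a natural fibre bundle whose fibres are modelled on the symmetric space $P(n) = SL(n,\R)/SO(n)$ of positive definite symmetric $n\times n$ matrices of determinant one, and to exploit the nonpositive curvature of $P(n)$ to produce a canonical $G$-invariant section by a pointwise circumcentre construction. This is the geometric picture already flagged in the introduction, where the bundle over the manifold is said to inherit the natural geometry of $SL(n)/SO(n)$.

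First I would fix the standard Euclidean conformal structure $\mu_0$ on $U$ as a reference and identify the fibre at $p\in U$ of conformal structures with $P(n)$ via the orbit map of $GL(n,\R)$ modulo positive scaling. The Killing form makes $P(n)$ a $GL(n,\R)$-invariant Hadamard manifold: simply connected, complete, with nonpositive sectional curvature, hence in particular CAT(0). The pullback action $\mu\mapsto \phi^*\mu$ of any $\phi\in G$ is, pointwise, induced by an element of $GL(n,\R)$ and therefore acts on each fibre by an isometry. The assumption that $G$ is a uniform $K$-quasiconformal group means that all of these isometries arise from pullbacks with a common distortion bound.

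Next, for each $p\in U$, I would consider the orbit
\begin{equation*}
\mathcal{O}_p = \{\,(\phi^*\mu_0)(p) \,:\, \phi\in G\,\}\subset P(n).
\end{equation*}
The defining quasiconformality bound of Definition~\ref{idef}, combined with the eigenvalue ratio estimate~\eqref{ratio_bound}, shows that the eigenvalues of $\mu_0^{-1}(\phi^*\mu_0)$ have bounded ratios uniformly in $\phi\in G$. In the Killing metric on $P(n)$ this translates directly into boundedness of $\mathcal{O}_p$. I would then invoke the Bruhat--Tits / Cartan center-of-mass theorem: every bounded subset of a complete CAT(0) space has a unique circumcentre, which is fixed by every isometry preserving the set. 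Defining $\mu(p)$ to be the circumcentre of $\mathcal{O}_p$ yields, by construction, $(\phi^*\mu)(p) = \mu(p)$ for every $\phi\in G$, so $\mu$ is pointwise $G$-invariant.

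The main obstacle I anticipate is the regularity of $p\mapsto \mu(p)$: the output has to be a genuine, at least measurable, conformal structure, not merely a pointwise assignment of equivalence classes of matrices. To address this I would show that $\mathcal{O}_p$ depends measurably on $p$ in an appropriate Hausdorff-type topology on bounded subsets of $P(n)$ and then exploit Lipschitz continuity of the circumcentre map (on bounded subsets of a Hadamard manifold) to transfer measurability to $\mu$. A countability hypothesis on $G$, implicit in the manifold version mentioned in the introduction, is the cleanest way to execute this step, since it presents $\mathcal{O}_p$ as a countable union of graphs of continuous functions of $p$; without it, one replaces the circumcentre by an invariant mean on $P(n)$ in the style of Tukia's original argument. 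Once measurability of $\mu$ is secured the proof is complete, and the same argument runs verbatim on a Riemannian manifold, with the bundle of fibres $P(n)$ constructed invariantly from the Riemannian metric.
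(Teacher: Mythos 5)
Your proposal is correct and follows essentially the same route as the paper's proof of its Theorem~\ref{mainthm} (the Riemannian generalization of Tukia's result): realize conformal structures as sections of a bundle with fibres isometric to $SL(n)/SO(n)$ with the metric $\mbox{tr}(A^{-1}XA^{-1}Y)$, show the (determinant-normalized) pullbacks act by fibrewise isometries, bound the orbit $E(p)$ via the eigenvalue-ratio estimate~\eqref{ratio_bound}, take the centre of the unique smallest enclosing ball in each nonpositively curved fibre, and obtain measurability from countability of $G$ together with continuity of the circumcentre map in the Hausdorff metric. The only cosmetic difference is that the paper makes the "modulo positive scaling" step explicit via the normalized pullback $\phi_N^*A=\phi^*A/\mbox{Det}_g(\phi^*g)^{1/n}$ and first restricts to a full-measure $G$-invariant set where all group elements are differentiable with nonvanishing Jacobian.
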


A quasiconformal group is a group of quasiconformal mappings that all have a same quasiconformality constant $K$. In particular, the distortion of all the iterates of the elements in the group remain bounded by that same $K$. 

A conformal structure on $\R^n$ is a measurable symmetric positive definite matrix field $G(x)\in \R^{n\times n}$ such that distortion function applied to the identity matrix, the Euclidean metric, and to $G$ is bounded:
$$
K^2(I,G(x))\leq K^2 \mbox{ a.e. }
$$
A conformal structure $G$ is an \emph{invariant conformal structure} of a quasiconformal group $\Gamma$ if all the mappings $\phi$ of the group $\Gamma$ satisfies a.e. the Beltrami equation:
\begin{equation*}
 D\phi(x)^TG(\phi(x))D\phi(x)=J_\phi(x)^{2/n}G(x),  \ \phi\in \Gamma.
\end{equation*}
The determinant of the conformal structure $G$ is normalized to $1$.

On a Riemannian manifold $(M,g)$, we define a \emph{conformal structure} to be a Riemannian metric $h$ whose $g$-invariant determinant equals $1$ a.e. and 
$$
K^2(g,h)\leq K^2 \mbox{ a.e. on } M.
$$ 
We do not assume other regularity assumptions on conformal structures other than measurability. Equivalently, a conformal structure can be understood as a measurable Riemannian metric whose volume form equals a.e. that of $g$ (cf.~\cite{Liimatainen1}). A conformal structure $h$ is an \emph{invariant conformal structure} for a quasiconformal group $\Gamma$ if every mapping of the group is conformal with respect to $h$:
\begin{equation*}
 \phi^*h=c\, h \mbox{ a.e. on } M, \  \phi\in \Gamma.
\end{equation*}
Here the conformal factor $c=c_\phi$ is positive a.e. and can be solved by taking the determinant of the equation:
\begin{equation*}
 c=\Detg{\phi^*h}{h}^{1/n}=\Detg{\phi^*g}{g}^{1/n}\equiv\Detg{D\phi}{g}^{2/n}.
\end{equation*}
Here the second equality holds by the assumption that the invariant determinant of $h$ equals $1$. We prove the following theorem that generalizes Tukia's result to general Riemannian manifolds in the countable case.
\begin{thm}\label{mainthm}
 Let $(M,g)$ be a Riemannian manifold with a continuous Riemannian metric $g$ and let $\Gamma$ be a countable quasiconformal group on $M$. Then there is a measurable Riemannian metric $h$ such that the equation
\begin{equation*}
 \phi^*h=c\,h
\end{equation*}
holds a.e. on $M$ for all mappings $\phi$ in the group $\Gamma$. Here $c$ depends on $\phi$. The volume form of $h$ equals that of $g$ a.e. and it holds that
$$
K^2(g,h)\leq H^2 \mbox{ a.e. on } M
$$
where $H$ depends only on $n$ and $K$.
\end{thm}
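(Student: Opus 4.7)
The plan is to follow the strategy pioneered by Tukia and construct $h$ fibrewise as the circumcenter of the orbit of $g$ under $G$ in a bundle of conformal structures. At each $p \in M$, let $\mathcal{P}_p$ denote the space of positive definite symmetric $(0,2)$-tensors $k$ on $T_pM$ satisfying $\Detg{k}{g}(p) = 1$; this is the fibre of conformal structures at $p$. The congruence action of $SL(T_pM)$ is transitive on $\mathcal{P}_p$, identifying it with $SL(n)/SO(n)$. Equipped with the restriction of the trace metric $\langle A, B\rangle_k = \tr{k^{-1}Ak^{-1}B}$ on positive definite symmetric forms, $\mathcal{P}_p$ becomes a complete simply connected Riemannian manifold of non-positive sectional curvature, that is, a Hadamard manifold; the full general linear group acts by isometries of this metric via congruence.

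For each $\phi \in G$, Theorem~\ref{rqr}(3) gives $J_\phi \neq 0$ a.e., so $\phi^*g$ is positive definite a.e.\ and the renormalization
$$\widetilde{\phi^*g}(p) := \frac{\phi^*g(p)}{\Detg{\phi^*g}{g}(p)^{1/n}} \in \mathcal{P}_p$$
is well defined for a.e.\ $p$. Set $O_p := \{\widetilde{\phi^*g}(p) : \phi \in G\}$, a countable subset of $\mathcal{P}_p$. The $K$-quasiconformality assumption together with~\eqref{ratio_bound} bounds the ratio of the largest to the smallest eigenvalue of $g^{-1}\phi^*g$ by $n^nK^2$ uniformly in $\phi$, and since the Riemannian distance on $\mathcal{P}_p$ is controlled by this ratio once the product of eigenvalues is normalized to one, $O_p$ lies in a ball of radius $R=R(K,n)$ in $\mathcal{P}_p$ for a.e.\ $p$. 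I then define $h(p)$ to be the unique circumcenter of the bounded set $O_p$ in the Hadamard manifold $\mathcal{P}_p$; existence and uniqueness are standard in non-positive curvature.

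The invariance is the heart of the argument. For $\psi \in G$ the pullback acts on forms at $\psi(p)$ by congruence by $D\psi_p$; this commutes with scalar multiplication and is an isometry of the trace metric, so it descends to an isometry $\widetilde{\psi^*}: \mathcal{P}_{\psi(p)} \to \mathcal{P}_p$, defined for a.e.\ $p$. Because $G$ is a group, $\widetilde{\psi^*}$ maps $O_{\psi(p)}$ bijectively onto $O_p$, and uniqueness of the circumcenter forces $\widetilde{\psi^*}(h(\psi(p))) = h(p)$. Unwinding the normalization this reads $\psi^*h = c\, h$ a.e., with conformal factor $c = \Detg{\psi^*g}{g}^{1/n} > 0$ a.e. The identity $\Detg{h}{g} = 1$ is built into the fibre $\mathcal{P}_p$, so the volume form of $h$ coincides with that of $g$.

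The main obstacle I expect is measurability of $h$. Each section $\widetilde{\phi^*g}$ is measurable because the weak differential $D\phi$ is, and countability of $G$ together with continuity of the circumcenter with respect to Hausdorff convergence of bounded sets in a fixed Hadamard space should give measurability of $h$, after choosing a continuous local frame for $TM$ to locally trivialize the bundle $\mathcal{P} \to M$. A secondary care point is to confirm that the renormalized pullback $\widetilde{\psi^*}$ is genuinely an isometry of $\mathcal{P}$ and not merely a congruence on positive definite forms; this follows from the fact that congruence and scalar multiplication commute, so the former descends cleanly to the quotient by the latter.
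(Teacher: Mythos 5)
Your proposal is correct and follows essentially the same route as the paper: renormalize the pullbacks of $g$ to land in the determinant-one fibre $SL(n)/SO(n)$ with the trace metric, bound the orbit's diameter via the eigenvalue-ratio estimate~\eqref{ratio_bound}, take the unique circumcenter in each fibre (a Hadamard/NPC space), use the fact that the normalized pullback is a fibrewise isometry to get invariance, and obtain measurability from countability of $G$ plus continuity of the circumcenter map in the Hausdorff metric. The only point handled more explicitly in the paper is the choice of a single $G$-invariant full-measure set $N$ on which all orbits and pullbacks are simultaneously defined, which your a.e.\ statements implicitly require.
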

Here a quasiconformal group means a group of Riemannian quasiconformal mappings all having a same quasiconformality constant $K$.

We motivate the study of invariant conformal structures by the recent interest concerning invariant conformal structures on Riemannian manifolds and even in the sub-Riemannian setting~\cite{Pankka, Balogh}. See also~\cite{Martin, Kirsi, Astola} on the existence of uniformly quasiregular mappings (i.e. quasiregular semigroups generated by a single mapping) on various manifolds. To increase the number of potential applications of the theorem above, we discuss its generalization to abelian semigroups.

Conformal structures are sections of the bundle $\mathcal{S}$ of positive definite symmetric $2$-covariant tensors on $M$ whose $g$-invariant determinant is equal to one. The bundle $\mathcal{S}$ admits a special geometry that we employ in the proof of the above theorem.

\section{The bundle $\mathcal{S}$}\label{bundleS}
Let $(M,g)$ be a Riemannian manifold and $T^{2}_{0}M$ the bundle of its $2$-covari\-ant tensors, the tensors with two lower indices. We consider the subset $\mathcal{S}$ of $T^{2}_{0}M$ that consists of symmetric positive definite tensors $A$ whose invariant determinant
\begin{equation*}
 \Det{A}=\det{g^{-1}A}
\end{equation*}
equals one. We give $\mathcal{S}$ a fiber bundle structure over $M$, which will have the following properties. The fibers of $\mathcal{S}$ are naturally diffeomorphic to the manifold $\mathcal{P}$ of positive definite symmetric determinant one matrices. A transition function of the bundle $\mathcal{S}$ can be taken to be a mapping from an open subset of $M$ to the set of orthogonal matrices $O(n)$. That is, the structure group of $\mathcal{S}$ is $O(n)$. The construction of the fiber bundle here follows an accessible introduction to fiber bundles of~\cite{Frankel}.

Let us start by constructing local trivializations for $\mathcal{S}$. For this, choose a (continuous) orthonormal coframe $\{e^i\}$ on an open subset $U$ of $M$. We can write an element $A \in T^{2}_{0}U\cap \mathcal{S}$ as
\begin{equation*}
 A=A_{ij}e^i\otimes e^j.
\end{equation*}
Here the components $A_{ij}$ define a positive definite symmetric matrix. In an orthonormal coframe, it also holds that $\det{A}=\det{g^{-1}A}=\Det{A}=1$. Thus $A$ can be identified with a matrix of the manifold $\mathcal{P}$. A local trivialization is a mapping
\begin{equation}\label{loc_triv}
 \psi:\pi^{-1}U \rightarrow U\times \mathcal{P}, \hspace{10pt} A\mapsto (\pi(A),A_{ij}),
\end{equation}
where $\pi$ is the restriction of the bundle projection $T^2_0M\rightarrow M$ to $\mathcal{S}\rightarrow M$.
 
In case $(\tilde{\psi},V)$ is another trivialization corresponding to an orthonormal coframe $\{\tilde{e}^i\}$, we have a representation for $A\in \pi^{-1}(U\cap V)$ as
\begin{equation*}
  A=\tilde{A}_{ij}\tilde{e}^i\otimes \tilde{e}^j.
\end{equation*}
Since any two orthonormal basis are related by an orthogonal transformation, it follows that
\begin{equation*}
 A=\tilde{A}_{ij}\tilde{e}^i\otimes \tilde{e}^j=\tilde{A}_{ij} (h e^i)\otimes (h e^j)=\tilde{A}_{ij}h^i_ke^k\otimes h^j_le^l,
\end{equation*}
where $h=(h^i_j)\in O(n)$ is an orthogonal matrix. Therefore, the matrices $(A_{ij})$ and $(\tilde{A}_{ij})$ representing the same $A\in \mathcal{S}$ are related by
\begin{equation}\label{trans}
 A=h^T\tilde{A}h.
\end{equation}
A transition function $c_{UV}$ at $p\in U\cap V$ is a diffeomorphism of $\mathcal{P}$. By the calculation above, we have
\begin{equation*}
 c_{UV}(p)[A]=h(p)^TA h(p), 
\end{equation*}
where $h:U\cap V\rightarrow O(n)$. Thus $\mathcal{S}$ is an $O(n)$ fiber bundle over $(M,g)$ with fibers diffeomorphic to $\mathcal{P}$. We call $\mathcal{P}$ the \emph{typical fiber} of $\mathcal{S}$ and a fiber $\pi^{-1}(p)$ over $p\in M$ is denoted by $\mathcal{S}_p$.

The set $\mathcal{P}$ of positive definite symmetric matrices with determinant $1$ is a smooth manifold. It can be identified with 
\begin{equation*}
 SL(n)/SO(n).
\end{equation*}
We equip $\mathcal{P}$ with a Riemannian metric $g^\mathcal{P}$ defined by 
\begin{equation*}
 g^\mathcal{P}_A(X,Y):=\tr{A^{-1}XA^{-1}Y}, \hspace{10pt} A\in \mathcal{P}, \hspace{10pt} X,Y\in T_A \mathcal{P}.
\end{equation*}
The bilinear symmetric form $g^\mathcal{P}$ is indeed a Riemannian metric. The positive definiteness can be seen by writing 
\begin{equation*}
 g^\mathcal{P}_A(X,X)=\tr{A^{-1}XA^{-1}X}=||A^{-1/2}XA^{-1/2}||_{HS}^2.
\end{equation*}
Here $||\cdot||_{HS}$ is the usual Hilbert-Schmidt norm of matrices. The equality follows from the fact that positive definite symmetric matrices have symmetric square roots and the fact that tangent vectors of symmetric matrices are symmetric. 

The Riemannian metric $g^\mathcal{P}$ is invariant under the action
\begin{equation*}
 Z[A]=|\det{Z}|^{-2/n}Z^T A Z, \hspace{10pt} Z\in GL(n), \ A\in \mathcal{P}
\end{equation*}
 of the general linear group $GL(n)$.
Geometrically $(\mathcal{P},g^\mathcal{P})$ is a complete, simply connected, globally symmetric Riemannian manifold of negative curvature~\cite{Tukia, Helgason, Jostb}.

The geometry of $\mathcal{P}$ extends naturally to the fibers of $\mathcal{S}$. Let us first construct a fiber metric for $\mathcal{S}$. A fiber metric of a fiber bundle is a Riemannian metric for each fiber $\pi^{-1}(p)$. It is an inner product for tangent vectors $X,Y \in T_A \pi^{-1}(p)$, where $A\in \pi^{-1}(p)$. 

For the construction of the fiber metric, let $p\in M$, $A\in\pi^{-1}(p)$ and $X,Y\in T_A\pi^{-1}(p)$. By definition, tangent vectors $X$ and $Y$ of $T_A\pi^{-1}(p)$ are given by paths $A_1(t)$ and $A_2(t)$ through $A$ in $\pi^{-1}(p)$. In a local trivialization, we have
\begin{equation*}
 \begin{split}
  A_1(t)&=(\pi(\pi^{-1}(p)),A_{1ij}(t))=(p,A_{ij}+t\dot{A}_{1ij}(0)+\mathcal{O}(t^2)) \\
  A_2(t)&=(\pi(\pi^{-1}(p)),A_{2ij}(t))=(p,A_{ij}+t\dot{A}_{2ij}(0)+\mathcal{O}(t^2)). \\
 \end{split}
\end{equation*}
We define an inner product for $X$ and $Y$ using the local trivialization as
\begin{equation}\label{innerp}
\begin{split}
 \la X,Y\ra_A&:=\tr{A^{-1}\dot{A}_{1}(0)A^{-1}\dot{A}_2(0)}. \\
\end{split}
\end{equation}
It is the $g^\mathcal{P}$-inner product of the representation matrices. 
 
The inner product of~\eqref{innerp} is well defined. If $\tilde{A}, \tilde{A}_1(t)$ and $\tilde{A}_2(t)$ correspond to another trivialization, we have
\begin{equation*}
\begin{split}
 A^{-1}&=(h^T\tilde{A}h)^{-1}=h^{-1}\tilde{A}^{-1}h^{-T} \\
 \dot{A}_{1}(0)&=h^T\dot{\tilde{A}}_1(0)h \\
 \dot{A}_{2}(0)&=h^T\dot{\tilde{A}}_2(0)h \\
\end{split}
\end{equation*}
yielding
\begin{equation*}
\begin{split}
 \la X,Y\ra_A&=\tr{\l(h^{-1}\tilde{A}^{-1}h^{-T}\r)\l(h^T\dot{\tilde{A}}_1(0)h\r)\l(h^{-1}\tilde{A}^{-1}h^{-T}\r)\l(h^T\dot{\tilde{A}}_2(0)h\r)} \\
  &=\tr{\tilde{A}^{-1}\dot{\tilde{A}}_{1}(0)\tilde{A}^{-1}\dot{\tilde{A}}_2(0)}. \\
\end{split}
\end{equation*}
Thus the definition of the inner product calculated in different local trivializations of $\pi^{-1}U$ and $\pi^{-1}V$ agree in the overlap of $U$ and $V$. Fibers of $\mathcal{S}$ are isometric to the typical fiber $(\mathcal{P},g^\mathcal{P})$. 

We denote the fiber metric by $g^\mathcal{V}$, where $\mathcal{V}$ indicates that it is an inner product for the \emph{vertical vectors} of $\mathcal{S}$. The fiber distance $d^\mathcal{V}$ is induced by the fiber metric $g^\mathcal{V}$. In a local trivialization it holds, with a slight abuse of notation, that
\begin{equation}\label{distance_in_P}
 d^\mathcal{V}(A,B)=d^\mathcal{P}(A,B).
\end{equation}
Here $A$ and $B$ on the right hand side denote the corresponding representations of $A$ and $B$ in the local trivialization. On the left hand side $A$ and $B$ are elements of a fiber of $\mathcal{S}$.

We define the determinant normalized pullback $\phi_N^*$ of a (weakly) differentiable mapping $\phi:M\rightarrow M$ with a non-vanishing determinant. It is the pullback of $2$-covariant tensors whose determinant is normalized by the formula
\begin{equation*}
 \phi_N^*A:=\frac{\phi^*A}{\Detg{\phi^*g}{g}^{1/n}}.
\end{equation*}
The invariant determinant of a tensor $A\in \mathcal{S}_\phi$ is preserved:
\begin{equation*}
 \Detg{\phi_N^*A}{g}=\frac{\det{g^{-1}D\phi^TAD\phi}}{\det{g^{-1}D\phi^Tg|_\phi D\phi}}=\det{g^{-1}|_\phi A}=1.
\end{equation*}
For each $p\in M$, the normalized pullback is a mapping from the fiber of $\mathcal{S}$ over $\phi(p)$ to the fiber of $\mathcal{S}$ over $p$.

Finally, we check that the normalized pullback is an isometry between the fibers $\mathcal{S}_{\phi(p)}$ and $\mathcal{S}_p$ of $\mathcal{S}$ equipped with the fiber metric $g^\mathcal{V}$ for any $p\in M$. To simplify the notation, let us denote for a while $F= \phi_N^*:\mathcal{S}_{\phi(p)}\rightarrow \mathcal{S}_p$. With this notation, the isometry condition for $\phi_N^*$ reads
\begin{equation}\label{isom}
 g^\mathcal{V}_{F(A)}(F_* X,F_* Y)=g^\mathcal{V}_A(X,Y),
\end{equation}
where $F_*$ is the pushforward of $F$ and $X,Y\in T_A \mathcal{S}_{\phi(p)}$, $A\in \mathcal{S}_{\phi(p)}$. To verify this, choose paths $X(t)$ and $Y(t)$ tangents to $X$ and $Y$ at $A$ respectively. We have
\begin{equation*}
F_* X=\frac{d}{dt}|_{t=0}\phi_N^*X(t)=\frac{d}{dt}|_{t=0}\frac{D\phi^TX(t)D\phi}{\det{g^{-1}\phi^*g}^{1/n}}=\frac{D\phi^TXD\phi}{\det{g^{-1}\phi^*g}^{1/n}}.
\end{equation*}
and similarly for $Y$. This gives
\begin{equation*}
\begin{split}
 g^\mathcal{V}_{F(A)}(F_* X,F_* Y)&= \mbox{tr}\l(\l(\frac{D\phi^TAD\phi}{\det{g^{-1}\phi^*g}^{1/n}}\r)^{-1} \frac{D\phi^TXD\phi}{\det{g^{-1}\phi^*g}^{1/n}}\r.  \\
 &\times \l.\l(\frac{D\phi^TAD\phi}{\det{g^{-1}\phi^*g}^{1/n}}\r)^{-1} \frac{D\phi^TYD\phi}{\det{g^{-1}\phi^*g}^{1/n}}\r)  \\
 &=\tr{A^{-1}XA^{-1}Y}=g^\mathcal{V}_A(X,Y)
\end{split}
\end{equation*}
showing that the isometry condition holds.

\section{Proof of the theorem~\ref{mainthm}}
We prove Theorem~\ref{mainthm}. We study a \emph{set valued section} $E$ of $\mathcal{S}$ constructed from the normalized pullbacks of the mappings of the group $\Gamma$ applied to the Riemannian metric $g$ on $M$:
\begin{equation*}
 E(p)=\{\psi_N^*g(p): \psi\in \Gamma\}.
\end{equation*}
In case the initial metric $g$ happens to be an invariant conformal structure $E$ is just the initial metric $g$. Otherwise the quasiconformality condition will restrict the values of $E$ such that each set $E(p)\subset \mathcal{S}_p$ belongs to a unique smallest ball in the fiber metric of $\mathcal{S}$. The section of $\mathcal{S}$ consisting of the centers of the smallest balls is to be an invariant conformal structure. The proof is a generalization of the mentioned Tukia's result~\cite[Theorem F]{Tukia}.

\begin{proof}[Proof of Theorem~\ref{mainthm}]
Let $\Gamma$ be a countable $K$-quasiconformal group of homeomorphisms on a Riemannian manifold $(M,g)$. The countability of $\Gamma$ together with Theorem~\ref{rqr} imply that we can find a $\Gamma$-invariant measurable subset $N$ of $M$ which is of full measure and every $\phi\in \Gamma$ is differentiable on $N$ with a non-vanishing Jacobian determinant. By the properties of $N$, we can define a set valued section $E: N\rightarrow \mathcal{S}$ by
\begin{equation*}
 E(p)=\{\psi_N^*g(p): \psi\in \Gamma\}.
\end{equation*}

The Riemannian $K$-quasiconformality of the mappings in $\Gamma$ will give a uniform bound for the diameter of each set $E(p)\subset \mathcal{S}_p$. To see this, let $p\in N$ and $\phi\in \Gamma$ be arbitrary, and choose a local trivialization of $\mathcal{S}$ on a neighborhood of $p$ as in~\eqref{loc_triv}. We calculate the fiber distance between $\phi_N^*g$ and $g$ at $p$. In a local trivialization, we have by~\eqref{distance_in_P} that
\begin{equation}\label{fib_dist}
 d^\mathcal{V}_p(g(p),\phi_N^*g(p))=d^\mathcal{P}(g(p),\phi_N^*g(p)).
\end{equation}
On the right hand side we have the distance of the representation matrices in the typical fiber $\mathcal{P}$. 

In the local trivialization, the representation matrix of $g$ is just the identity matrix $I$. Accordingly, the representation matrix of $\phi^*g$ equals that of $g^{-1}\phi^*g$. Therefore the equation~\eqref{fib_dist} reads
\begin{equation*}
 d^\mathcal{V}_p(g(p),\phi_N^*g(p))=d^\mathcal{P}(I,g^{-1}(p)\phi_N^*g(p)).
\end{equation*}

It is shown in~\cite[p. 27]{Maass} that the distance of an element $A\in \mathcal{P}$ from the identity matrix $I$ is given by
 \begin{equation*}
 d^\mathcal{P}(I,A)=((\log{\mu_1})^2+\cdots +(\log{\mu_n})^2)^{1/2},
\end{equation*}
where $\mu_i$ are the eigenvalues of $A$. Let us denote by $\mu_i$ the eigenvalues of $g^{-1}(p)\phi_N^*g(p)$.  We observed in~\eqref{ratio_bound} that the ratio of the largest and smallest eigenvalue of $g^{-1}\phi^*g$ is bounded by $n^nK^2$. Of course, the same bound holds for all the ratios $\lambda_i/\lambda_j$ of the eigenvalues $\lambda_i$ of $g^{-1}\phi^*g$, $i,j=1,\ldots,n$. We have
\begin{equation*}
 \mu_i^n=\frac{\lambda_i^n}{\lambda_1\cdots\lambda_n}\leq (n^nK^2)^n
\end{equation*}
and it follows that
\begin{equation}\label{unifbd}
 d^\mathcal{V}_p(g(p),\phi_N^*g(p))=((\log{\mu_1})^2+\cdots +(\log{\mu_n})^2)^{1/2}\leq n^{1/2}\log{n^nK^2}.
\end{equation}
Since $\phi\in \Gamma$ and $p\in M$ were arbitrary, the diameter of each set $E(p)\subset \mathcal{S}_p$ is bounded by $2 n^{1/2}\log{n^nK^2}$ independently of $p$.
 
Let us then show that $E$ is invariant under the normalized pullbacks of the mappings in the group $\Gamma$. Normalized pullbacks satisfy a product rule
\begin{equation*}\label{prod_rule}
 \psi_N^*\circ \phi_N^*=(\phi\circ\psi)_N^*,
\end{equation*}
since the chain rule of differentiation~\eqref{pullback} holds and consequently
\begin{equation*}
 \Detg{\phi^*g}{g}|_\psi\Detg{\psi^*g}{g}=\Detg{(\phi\circ\psi)^*g}{g}.
\end{equation*}
By this product rule and the group property of $\Gamma$, we see that 
\begin{equation}\label{grpsol}
 (\phi_N^*E)(p)=\{\phi_N^*(\psi_N^*g)(p): \psi\in \Gamma\}=\{(\psi\circ\phi)_N^*g(p): \psi \in \Gamma\}=E(p),
\end{equation}
for all $\phi\in \Gamma$ showing the invariance of $E$. In this sense, $E$ can be viewed as a set valued solution to the problem of finding an invariant conformal structure for $\Gamma$.

It is shown in~\cite{Tukia} that each bounded set of $\mathcal{P}$ belongs to a unique smallest ball. Since the fibers of $\mathcal{S}$ are isometric to $\mathcal{P}$, it follows that each $E(p)$ belongs to a unique smallest ball. We denote by $h$ the section formed from the centers of the smallest balls.

We have seen that $E$ is invariant under normalized pullbacks of the group and recall from~\eqref{isom} that the normalized pullback is an isometry between the fibers of $\mathcal{S}$. Thus the unique smallest balls, and in particular their centers, are mapped to each other by normalized pullbacks. Accordingly,
\begin{equation*}
  \phi_N^*h(p)=h(p),
\end{equation*}
for every $p\in N$ and $\phi\in \Gamma$. That is
$$
\phi^*h =c\, h \mbox{ a.e. on }M
$$
with $d\mu_g$ equaling a.e. $d\mu_h$.  Here $c=\Detg{\phi^*g}{g}^{1/n}$. From~\eqref{unifbd} it also follows that $K^2(g,h)\leq H$ a.e. with $H$ depending only on $K$ and $n$.

It remains to show that $h$ is measurable. This follows from exactly the same argument as in the proof of Tukia's theorem~\cite[Theorem F]{Tukia}. Let $\Gamma=\{\phi_0, \phi_1,\ldots\}$ and approximate $E$ by $E_j(p)=\{\phi_{iN}^*g: i\leq j\}$. Consider $h_j$ to be the image of the mapping $H$, which maps $E_j$ to the section constructed from the centers of unique smallest balls. Since $H$ is continuous in the Hausdorff metric, it follows that each $h_j$ is measurable and that $h_j\rightarrow h$ as $j\rightarrow\infty$. Hence $h$ is measurable.
\end{proof}

We conclude this chapter by discussing how to prove the statement of Theorem~\ref{mainthm} in the case the group $\Gamma$ is replaced by an abelian quasiregular semigroup, by which we mean an abelian semigroup of Riemannian quasiregular mappings whose distortion is bounded by a same constant $K$. We see form the proof above that, apart from~\eqref{grpsol}, all the arguments and equations there remain valid for a semigroup of Riemannian $K$-quasiregular mappings even if the semigroup is not abelian. In~\eqref{grpsol} the group property of $\Gamma$ was used to deduce that $\Gamma\circ\phi$ equals $\Gamma$ for any $\phi\in\Gamma$. 

This difference between quasiconformal groups and quasiregular semigroups on this matter was addressed in the work by Iwaniec and Martin~\cite{Martin}. See also~\cite{Iwaniec}. They proved that any abelian quasiregular semigroup $\tilde{\Gamma}$ on an $n$-sphere admits an invariant conformal structure. The argument they used relied on the fact that quasiregular mappings on an $n$-sphere are open and discrete. Therefore, at every point on the sphere, every mapping of $\tilde{\Gamma}$ has only a finite collection of local inverses. This fact together with the fact that the inverse of a quasiconformal mapping is quasiconformal and the assumption that $\tilde{\Gamma}$ is abelian let them to establish an equation analogous to~\eqref{grpsol}.

As Riemannian quasiregular mappings are open and discrete, and the inverse of a Riemannian quasiconformal mapping is Riemannian quasiconformal, the argument explained above applies for an abelian semigroup of Riemannian $K$-quasiregular mappings on a compact manifold. In the non-compact case, at each point on the manifold, every mapping has at most countably many local inverses and the argument by Iwaniec and Martin applies also in that case. We have argued that the statement of Theorem~\ref{mainthm} continues to hold for any abelian semigroup of Riemannian quasiregular mappings on a Riemannian manifold.

\chapter{Concluding remarks}
This chapter reviews the three publications that form the main contributions of this thesis together with the theory of Riemannian quasiregular mappings developed in the previous chapter. In each review, a short summary of the publication is given first, which is followed by a more detailed presentation of the publication. The three publications studies the questions on the existence of conformal mappings, existence of optimal Riemannian metrics and the regularity of conformal mappings. The emphasis is on the global aspects of these questions and new tools to study the question are introduced in these publications.

Publication I is a non-existence result. In that publication it is proven that generic Riemannian manifolds of dimension $n\geq 3$ do not have any conformal symmetries. Publication II can be viewed as a functional analytic generalization of Tukia's result for mappings on Riemannian manifolds that preserve a given volume. This publication also proves a nontrivial generalization of Neumann's mean ergodic theorem and a fixed point theorem for nonlinear mappings on certain nonlinear spaces. The last publication concerns the regularity of conformal mappings via a system of $n$-harmonic coordinates that can be seen as a generalization of both the harmonic and of the isothermal coordinates.  The existence of $n$-harmonic coordinates is proven in this paper for Riemannian manifolds with $C^r$ regular, $r>1$, metric tensors.
\section{Publication I}
In this publication it is proven that generic smooth Riemannian manifolds of dimension $\geq 3$ do not admit any nontrivial local conformal diffeomorphisms. Consequently, generic manifolds of dimension $\geq 3$ do not admit nontrivial conformal Killing vector fields near any point. A direct application of this result to the inverse problem of Calder\'on on manifolds, which shows that generic manifolds of dimension $\geq 3$ do not admit limiting Carleman weights near any point of the manifold, is presented.

A subset of a topological space is \emph{residual} if it contains a countable intersection of open dense sets and we call a property \emph{generic} if it holds for all elements in some residual set. In this publication, the set of all smooth Riemannian metrics is equipped with the $C^\infty$ topology~\cite{BaUr} in which residual sets are dense. This publication gives a rigorous proof of the following theorem.
\begin{thm}
Let $M$ be a compact boundaryless $C^{\infty}$ manifold having dimension $n \geq 3$. There is a residual set of Riemannian metrics on $M$ for which there are no conformal diffeomorphisms between any distinct open subsets of the manifold. 
\end{thm}
In terms of the distortion function the result states that $K^2(g,\phi^*g)$ on a generic Riemannian manifold $(M,g)$ can not be made to equal one by any diffeomorphims $\phi$ even locally. This interpretation leaves open an interesting question, which is not present in the Euclidean case, on the infimum of the distortion in the set local diffeomorphisms. The proof of this theorem involves a dimension count argument based on jet spaces and a study of the convergence properties of conformal mappings. 

The result is a conformal analogue of a result by Sunada concerning nonexistence of local isometries on manifolds~\cite{Su}. The result makes precise the principle that generic Riemannian manifolds do not admit any conformal symmetries. The dimension two is special due to the existence of isothermal coordinates: a composition of isothermal coordinates with any restriction of a M\"obius transformation on the complex plane $\mathbb{C}$ gives conformal diffeomorphisms between distinct open sets on two dimensional Riemannian manifolds.

The result can be regarded as a partial inverse to Tukia's result~\cite{Tukia} and its generalization, Theorem~\ref{main_thm}. The Riemannian metric given by Tukia's result can be considered as an optimal Riemannian metric for the mappings in a group of quasiconformal mappings, whereas the generic Riemannian metrics described by the result are not optimal for any diffeomorphism on the manifold. The analogue of the result as an inverse to the result of Tukia is only partial due to the smoothness assumptions used in this publication. In particularly, it cannot be deduced that for a generic Riemannian manifold there are no quasiconformal groups.

Recall that a smooth vector field $X$ on a Riemannian manifold $(U,g)$ is called \emph{conformal Killing vector field} if the local flows generated by $X$ are conformal transformations.
Equivalently, $X$ is a conformal Killing vector field if and only if it satisfies
$$
\mathcal{L}_Xg-\Trg{\mathcal{L}_Xg}{g}g=0.
$$
The sufficiency part of this statement can be seen from Example~\ref{conformal_flow}. Since the local flows generated by conformal Killing vector fields are conformal diffeomorphisms, the existence of a nontrivial conformal Killing field near a point implies the existence of a conformal diffeomorphism from some open set $(U,g)$ onto a disjoint open set $(V,g)$. Therefore, the theorem has the following consequence.
\begin{cor} \label{cor_main2}
Let $M$ be a compact boundaryless $C^{\infty}$ manifold having dimension $n \geq 3$. There is a residual set of Riemannian metrics on $M$ which do not admit a nontrivial conformal Killing vector field near any point.
\end{cor}
A Riemannian manifold whose Riemannian metric is in the residual set described by this theorem is called \emph{nowhere conformally homogeneous} in this publication. 

Limiting Carleman weights were introduced in~\cite{KSU} as a general method for studying the inverse conductivity problem of Calder\'on in Eu\-clidean space. Subsequently, in \cite{DKSaU} it was shown that these weights are also useful in the Calder\'on problem for certain anisotropic conductivities and that article also gave a characterization of manifolds admitting limiting Carleman weights. Let $(U,g)$ be an open Riemannian manifold (that is, $U$ has no boundary and no component is compact). In this case $(U,g)$ admits a limiting Carleman weight if and only if $(U,cg)$ admits a parallel unit vector field for some positive function $c$~\cite{DKSaU}. A vector field is parallel on $(U,cg)$ if it satisfies 
$$
\nabla_{cg} X=0.
$$
Here $\nabla_{cg}$ is the Levi-Civita connection of the Riemannian metric $cg$. A parallel vector field with respect $\nabla_{cg}$ is a conformal Killing vector field with respect to the Levi-Civita connection $\nabla_g$ of the Riemannian metric $g$. This leads to the final corollary of this publication.
\begin{cor}
Let $(U,g)$ be an open submanifold of some compact manifold $(M,g)$ without boundary, having dimension $n \geq 3$. There is a residual set of Riemannian metrics on $M$ which do not admit limiting Carleman weights near any point of $U$.
\end{cor}

\section{Publication II}
The final main theorem of this publication gives a natural condition for when a mapping on a Riemannian manifold $(M,g)$ preserving the Riemannian volume form $d\mu$ is actually an isometry with respect to some other Riemannian metric $h$. The governing principle behind this theorem is analogous to the one in Theorem~\ref{mainthm}, but the framework and the statement of this theorem is more functional analytic in nature. Nontrivial generalizations of Neumann's mean ergodic theorem and a fixed point theorem to certain negatively curved metric spaces are the other main result of this publication. 

Let $(M,g)$ be a smooth, closed and oriented finite dimensional Riemannian manifold. Then the set of all (sufficiently smooth) Riemannian metrics $\M$ on $M$ can be considered as an infinite dimensional manifold whose tangent vectors can be given an $L^2$ inner product~\cite{Ebin, Clarke}. This publication considers the submanifold $\M_\mu$ of $\M$ consisting of Riemannian metrics that have the same volume form $d\mu$ induced by $g$. Equivalently, this infinite dimensional manifold consists of all the sections of the bundle $\mathcal{S}$ constructed in Chapter~\ref{bundleS} with respect to $g$.

The theory of infinite dimensional manifolds combines functional analysis and geometry. The tangent spaces $T_h\M_\mu$, $h\in \M_\mu$, consist of $g$-traceless $(0,2)$-tensor fields and the $L^2$-inner product for the tangent vectors is given by
$$
\langle U,V \rangle_g=\int_M \tr{g^{-1}Ug^{-1}V}d\mu.
$$
Under this inner product $\M_\mu$ can be considered as an infinite dimensional globally symmetric space of nonpositive curvature~\cite{Ebin, Freed, Clarke}. The induced Riemannian distance is given by the formula
\begin{equation}\label{ddist}
d^2(g,h)=\int_M\tr{(\log{(g^{-1}h)})^2}d\mu.
\end{equation}
The fact that the square of the distance $d$ is the integral of the square of the fiber distance $d^\mathcal{V}$, constructed in Chapter~\ref{bundleS}, gives $(\M_\mu, d)$ an interpretation as a curved $L^2$ space. The details of the used infinite dimensional setup are covered in the publication. 

Diffeomorphisms of $M$ preserving a given volume form are called volumorphisms. Volumorphisms act isometrically on $\M_\mu$ by pullback. A volumorphism $\phi$ preserves the volume of the tangent spheres of $M$, and, on a compact manifold the volumorphism $\phi$ is Riemannian quasiconformal (the distortion function $K^2(g,\phi^*g)$ is bounded). However, in general there is no restriction on the distortion of the iterates of the mapping. The following question is answered affirmative in this publication: ``If the action of a volumorphisms on $\M_\mu$ has a bounded orbit in the distance metric $d$, is there a fixed point of this action?''. 

In the case a fixed point exists, it belongs to a complete metric space $(X,\d)$ that contains $(\M_\mu,d)$ as an isometrically embedded subset. The metric space $(X,\d)$ consists of a.e. positive definite symmetric $(0,2)$-tensor fields whose volume forms agree with $d\mu$ a.e. The elements of $X$ have also the property that the integral in~\eqref{ddist} for any two elements $g$ and $h$ of $X$ is finite. Again, this condition should be considered as a curved $L^2$ condition for the elements of $X$. As a metric space, $(X,\d)$ is a complete global Alexandrov nonpositive curvature (NPC) space. See~\cite{Josta, Jostb} for details about Alexandrov NPC spaces. The first main theorem of this publication is the following. Here $\mathcal{D}_\mu^{s+1}$ denotes the space of volumorphisms of Sobolev class $H^{s+1}(M,M)$.
\begin{thm}
If the action of a volumorphism $\phi\in\mathcal{D}_\mu^{s+1}$ has a bounded orbit in $(X,\d)$ for some $p\in X$, then there exists a fixed point $g$ for the action in the $\d$-closure of the subset $co\{p,Tp,T^2p,\ldots\}$ of $X$. With respect to this fixed point, $\phi$ is a Riemannian isometry,
\begin{equation*}
 \phi^*g=g.
\end{equation*}
\end{thm}

To find a fixed point for the action of a volumorphism on $\M_\mu$, Neumann's mean ergodic theorem is generalized to suit the nonlinear setting of the publication. Mean ergodic theorems consider the convergence of averages of the iterates of the points under the action. In a general nonlinear setting there is no obvious notion of average, but on nonpositively curved spaces such a generalization of averages exists~\cite{Josta, Jostb, Karcher}. The mean ergodic theorem proven in this publications is the following.
\begin{thm}[Mean Ergodic Theorem]\label{main_thm} Let $(\mathcal{N},d)$ be a complete global Alexandrov NPC space and $T:\mathcal{N}\to\mathcal{N}$ a nonexpansive distance convex mapping. Then, for any $p\in \mathcal{N}$ whose orbit is bounded, and any $q\in \mathcal{N}$, the following are equivalent:
\begin{align*}
  (i) \qquad &Tq=q \mbox{ and } q\in \overline{co}\{p,Tp,T^2p,\ldots\}, \\
 (ii) \qquad  &q=\lim_{n}m_n(p), \\
 (iii) \qquad &q=\mbox{w-}\lim_n m_n(p), \\
  (iv) \qquad &q \mbox{ is a weak cluster point of the sequence } (m_n(p)).
\end{align*}
\end{thm}
Here $m_n(p)$ denote the average of the iterates of a point $p$. The notation $\mbox{w-}\lim$ and the notion of weak cluster point refers to generalizations of the definitions of weak limit and weak cluster point on Hilbert spaces to complete global Alexandrov NPC spaces~\cite{Josta}. A mapping is nonexpansive on $\mathcal{N}$ if 
$$
d(Tp,Tq)\leq d(p,q)
$$
holds for any $p,q\in \mathcal{N}$. The condition of distance convexity is a definition introduced in the publication that emerges naturally from the setting of the publication. A fixed point theorem without the additional assumption of distance convexity is also given in this publication.
\begin{thm}[Fixed point theorem]\label{fp_thm} 
Let $(\mathcal{N},d)$ be a complete global Alexandrov NPC space and $T:\mathcal{N}\to\mathcal{N}$ a nonexpansive mapping. Then, for any $p\in \mathcal{N}$ whose orbit is bounded, there exists a fixed point $q$ of $T$ in the subset $\ol{co}\{p,Tp,T^2p,\ldots\}$ of $\mathcal{N}$. 
\end{thm}

The mean ergodic theorem and the fixed point theorem are the other main results of this publication. These results are general and it is interesting to see whether they have other applications besides the one considered in this work in both finite and infinite dimensional settings.

\section{Publication III}
The third publication of this thesis studies the smoothness of conformal mappings between Riemannian manifolds whose metric tensors have limited regularity. It is shown that any bi-Lipschitz mapping, and more generally a Riemannian $1$-quasiregular mapping, between two manifolds with $C^r$ metric tensors ($r > 1$) is a $C^{r+1}$ conformal local diffeomorphism. This gives a new proof of a regularity result of conformal mappings by Iwaniec~\cite{Iwaniec_thesis}. The proof is based on $n$-harmonic coordinates, a generalization of both the standard harmonic coordinates and of the isothermal coordinates, that is particularly suited to studying conformal mappings. The existence of $n$-harmonic coordinates on general Riemannian manifolds is the other main result of this publication. A convergence theorem of Chapter~\ref{convsection} is used to prove that a sequence of conformal mappings between $C^r$ ($r>1$) Riemannian manifolds converging uniformly on compact sets converge to a $C^{r+1}$ conformal mapping. 

This article addresses the following question: ``Given a conformal mapping between two smooth ($=C^{\infty}$) manifolds having metric tensors of limited regularity, how regular is the mapping?''. If one considers isometries instead of conformal mappings, it is known that a distance preserving homeomorphism between two Riemannian manifolds with $C^r$ ($r > 0$) metric tensors is in fact a $C^{r+1}$ isometry~\cite{Myers, HW,  Calabi}. A recent article by Taylor~\cite{Taylor} gave a proof of this fact based on the systematic use of harmonic coordinates.

The modern Liouville theorems for conformal Sobolev $\W$ mappings on $\R^n$ show that mappings satisfying Beltrami equation
$$
D\phi^TD\phi=J_\phi^{2/n}
$$
in the weak sense (cf. Chapter~\ref{section_riemannian_def}) are restrictions of M\"obius transformations. The first part of the proof of the Liouville theorem under the weak regularity assumption consists of showing that mappings satisfying the Beltrami equation are regular enough for one to apply the classical proof of the Liouville theorem~\cite[Ch. 5]{Iwaniec}. In this publication, this part of the Liouville's theorem is generalized to Riemannian $1$-quasiregular mappings $\phi:(M,g)\to (N,h)$ that consequentially satisfy
\begin{equation}\label{conf1again}
\phi^*h=cg
\end{equation}
in the weak sense (Prop.~\ref{1qc}). The regularity assumptions on the Riemannian metrics in this publication are that they are H\"older $C^r$ regular with H\"older exponent $r>1$. In this case it is shown that a $\W(M,N)$ mapping $\phi$ satisfying~\eqref{conf1again} is a local $C^{r+1}$ conformal diffeomorphisms. Similarly as in~\cite{Taylor}, this is done via a special coordinate system in which tensors on the manifold have maximal regularity. However, instead of harmonic coordinates that are useful in the regularity analysis of isometries, a system of \emph{$n$-harmonic coordinates} is employed in this publication. 

A function $u$ on a Riemannian manifold is called $p$-harmonic ($1 < p < \infty$) if it satisfies the nonlinear elliptic equation ($p$-Laplace equation) 
$$
\delta(\abs{du}^{p-2} du) = 0.
$$
Here $\abs{\,\cdot\,}$ is the norm induced by the Riemannian metric, $d$ is the exterior derivative, and $\delta$ is the codifferential (the adjoint of $d$ in the $L^2$ inner product on differential forms). A coordinate system is called $p$-harmonic if each coordinate function is $p$-harmonic and the coordinate system is $n$-harmonic if $p$ equals the dimension $n$ of the manifold. The existence of $p$-harmonic coordinate systems on any Riemannian manifold with $C^r$ regular ($r>1$) metric tensor is the other main result in this paper.

The special property of $n$-harmonic functions is that pullbacks by conformal mappings preserve this class. In this way, any conformal mapping $\phi:(M,g)\to (N,h)$ can be locally expressed as a composition of an $n$-harmonic coordinate chart and the inverse of another such chart: Let $p\in M$ and $v=(v_1,\ldots,v_n)$ be $n$-harmonic coordinates around $\phi(p)\in N$. Then $u=v\circ\phi=(u_1,\ldots, u_n)$ is an $n$-tuple of $n$-harmonic functions. Any system of $n$-harmonic coordinates on a manifold with $C^r$ metric tensor ($r > 1$) is in this publication shown to have $C^{r+1}_*$ regularity. Here $C^{r+1}_*$ denotes the Zygmund space that is well known to coincide with the H\"older space $C^{r+1}$ for non-integer $r$. Since we have
$$
\phi=v^{-1}\circ u,
$$
the regularity of conformal mappings follows directly from these facts. 

Another special property of $n$-harmonic coordinates observed in the publication is that any isothermal coordinate system is necessarily an $n$-harmonic coordinate system. Together with the conformal invariance of $n$-harmonic coordinates stated above, $n$-harmonic coordinates can be regarded as a generalizations of isothermal coordinates. The $n$-harmonic coordinates seem like a natural tool, and it is an interesting question whether they can be used for other questions in conformal geometry besides the one treated in this paper. The present work was motivated by Publication I, where regularity of conformal mappings was a key point.

\providecommand{\bysame}{\leavevmode\hbox to3em{\hrulefill}\thinspace}
\providecommand{\href}[2]{#2}
\bibliographystyle{plain}
\def\cprime{$'$} \def\cprime{$'$} \def\cprime{$'$}




\addpublication{Tony Liimatainen and Mikko Salo}{Nowhere conformally homogeneous manifolds and limiting Carleman weights}{Inverse Problems and Imaging}{6, Issue 3, 523--530}{August}{2012}{American Institute of Mathematical Sciences}{j1}
\addcontribution{The results in this article were obtained in collaboration with the coauthor. Of the results, the author's main contribution was in the proof of the main theorem and its basic idea.  The article was written in close collaboration. The author is mainly responsible for the publication process of the article.}

\addpublication{Tony Liimatainen}{Optimal Riemannian metric for a volumorphism and a mean ergodic theorem in complete global Alexandrov nonpositively curved spaces}{AMS Contemporary Mathematics: Analysis, Geometry and Quantum field theory}{584, 163--178}{December}{2012}{American Mathematical Society}{j2}
\addcontribution{This article represents independent research of the author.}

\addpublication[submitted]{Tony Liimatainen and Mikko Salo}{$n$-harmonic coordinates and the regularity of conformal mappings}{Submitted to Mathematical Research Letters, 20 pages, preprint: arXiv:1209.1285}{}{September}{2012}{No copyright holder at this moment}{j3}
\addcontribution{The results in this article were obtained in collaboration with the coauthor. The author is responsible for the basic idea of the article. The article was written in close collaboration.}



\end{document}